\def\eps{\varepsilon}
\def\bW{\boldsymbol{W}}
\def\SSigma{\boldsymbol{\Sigma}}
\def\supp{\mathop{\text{\rm supp}}}
\def\1{\mathbf 1}
\def\RR{\mathbb R}
\def\NN{\mathbb N}
\def\ZZ{\mathbb Z}
\def\ba{\boldsymbol a}
\def\bx{\boldsymbol x}
\def\bt{\boldsymbol t}
\def\bu{\boldsymbol u}
\def\bv{\mathbf v}
\def\llb{\llbracket}
\def\rrb{\rrbracket}
\def\emptyset{\varnothing}
\def\bW{\boldsymbol W}
\def\bB{\boldsymbol B}
\def\bT{\boldsymbol T}
\def\zzeta{\boldsymbol\zeta}
\def\ttheta{\boldsymbol\theta}
\def\ppi{\boldsymbol\pi}
\def\oomega{{\boldsymbol\omega}}
\def\Pb{\mathbf P}
\def\Ex{\mathbf E}
\def\Cov{\textbf{Cov}}
\def\mcA{\mathcal A}
\def\mcB{\mathcal B}
\def\mcF{\mathcal F}
\def\mcJ{\mathcal J}
\def\mcN{\mathcal N}
\def\mcP{\mathcal P}
\def\mcV{\mathcal V}
\def\Pbf{\Pb\!_f}
\def\PbO{\Pb\!_0}
\def\bTA{\bT_{\!\!\mcA}}
\def\bY{{\boldsymbol Y}}
\def\bk{{\boldsymbol k}}
\def\bj{{\boldsymbol j}}
\def\eeta{{\boldsymbol \eta}}
\def\KL{\mathcal K}
\begin{document}

  \title{Statistical inference in compound functional models}

  \titlerunning{Statistical inference in compound functional models}

  \author{Arnak Dalalyan\inst{1} \and \boxed{\rm \text{Yuri Ingster}} \and Alexandre B. Tsybakov\inst{1}}

  \authorrunning{Dalalyan, Ingster and Tsybakov}

\institute{
ENSAE-CREST-GENES\\
3, avenue Pierre Larousse\\
92245 MALAKOFF Cedex, FRANCE\\
  \email{arnak.dalalyan,alexandre.tsybakov@ensae.fr}
\and
St. Petersburg State Electrotechnical University}

  \date{\today}

  \maketitle

  \keywords{Compound functional model, minimax estimation, sparse additive structure,
  dimension reduction, structure adaptation}

\begin{abstract}
We consider a general nonparametric regression model called the compound model. It includes, as special cases, sparse additive regression and nonparametric (or linear) regression with many covariates but possibly a small number of relevant covariates. The compound model is characterized by three main parameters: the structure parameter describing the "macroscopic" form of the compound function,  the "microscopic" sparsity parameter indicating the maximal number of relevant covariates in each component and the usual smoothness parameter corresponding to the complexity of the members of the compound. We find non-asymptotic minimax rate of convergence of estimators in such a model as a function of these three parameters. We also show that this rate can be attained in an adaptive way.
\end{abstract}

\newcommand{\fix}{\marginpar{FIX}}
\def\fix#1{\marginpar{#1}}
\newcommand{\new}{\marginpar{NEW}}
\def\bB{\boldsymbol B}

\section{Introduction}\label{sec:1}

High dimensional statistical inference has known a tremendous development over the past ten years motivated
by applications in various fields such as bioinformatics, computer vision, financial engineering. The
most intensively investigated models in the context of high-dimensionality are the (generalized) linear
models, for which efficient procedures are well known and the theoretical properties are well understood (cf., for instance,
\cite{BRT1,DT08,DT11,Sara_Peter_book}). More recently, increasing interest is demonstrated for studying nonlinear models
in high-dimensional setting \cite{Koltchinskii_Yuan10,Gayraud_Ingster11,ComDal11,Raskutti_et_al11,Suzuki} under various types
of sparsity assumption. The present paper introduces a general framework that unifies these
studies and describes the theoretical limits of statistical procedures in high-dimensional
non-linear problems.

In order to reduce the technicalities and focus on the main ideas, we consider the Gaussian white noise model, which is known to be asymptotically
equivalent, under some natural conditions, to the model of regression \citep{BrownLow96,Reiss08}, as well as to other nonparametric models \citep{DalReiss06,GNZ}.
Thus, we assume that we observe a real-valued Gaussian process $\bY=\{Y(\phi):\phi\in L^2([0,1]^d)\}$
such that
$$
\Ex_{f}[Y(\phi)]=\int_{[0,1]^d} {f}(\bx)\,\phi(\bx)\,d\bx,\qquad
\Cov_f(Y(\phi),Y(\phi'))=\eps^2\int_{[0,1]^d} \phi(\bx)\phi'(\bx)\,d\bx,
$$
for all $\phi,\phi' \in L^2([0,1]^d)$, where $f$ is an unknown function in $L^2([0,1]^d)$, $\Ex_{f}$ and $\Cov_{f}$
are the expectation and  covariance signs, and $\eps$ is some positive number.  It is well known that these two properties
uniquely characterize the probability distribution of a Gaussian process that we will further denote by $\Pbf$ (respectively,
by $\PbO$ if $f\equiv0$). Alternatively, $\bY$ can be considered as a trajectory of the process
$$
dY(\bx)={f}(\bx)\,d\bx+\eps dW(\bx),\qquad\bx\in[0,1]^d,
$$
where $W(\bx)$ is a $d$-parameter Brownian sheet. The parameter $\eps$ is assumed known; in the model of regression it corresponds to the quantity
$\sigma^2 n^{-1/2}$, where $\sigma^2$ is the variance of noise. Without loss of generality, we assume in what follows that $0<\eps<1$.

\subsection{Notation}
First, we introduce some notation. Vectors in finite-dimensional spaces and infinite sequences will be denoted by boldface letters, vector
norms will be denoted by $|\cdot|$ while function norms will be denoted by $\|\cdot\|$. Thus, for $\bv=(v_1,\dots,v_d)\in\RR^d$ we set
$$
|\bv|_0=\sum_{j=1}^d\nolimits \1(v_j\neq 0),\quad |\bv|_\infty=\max_{j= 1,\dots,d} |v_j|,
\quad |\bv|_q^q=\sum_{j=1}^d\nolimits |v_j|^q,\ 1\le q<\infty,
$$
whereas for a function $f:[0,1]^d\to\RR$ we set
$$
\|f\|_\infty=\sup_{\bx\in[0,1]^d} |f(\bx)|,\qquad \|f\|_q^q=\int_{[0,1]^d} |f(\bx)|^q\,d\bx,\ 1\le q<\infty .
$$
We denote by $L^2_0([0,1]^d)$ the subspace of $L^2([0,1]^d)$ containing all the functions $f$ such that
$\int_{[0,1]^d} f(\bx)\,d\bx=0$. The notation $\langle\cdot,\cdot\rangle$ will be used for the inner product in
$L^2([0,1]^d)$, that is $\langle h,\tilde h\rangle=\int_{[0,1]^d} h(\bx)\tilde h(\bx)\,d\bx$ for any
$h,\tilde h\in L^2([0,1]^d)$. For two integers $a$ and $a'$, we denote by $\llb a,a'\rrb$ the set of all integers
belonging to the interval $[a,a']$. We denote by $[t]$ the integer part of  a real number $t$. For a finite set $V$,
we denote by $|V|$ its cardinality. For a vector $\bx\in\RR^d$ and a set of indices $V\subseteq \{ 1,\dots,d\}$, the
vector $\bx_V\in\RR^{|V|}$ is defined as the restriction of $\bx$ to the coordinates with indices belonging to $V$.
For every $s\in\{ 1,\dots, d\}$ and $m\in\NN$, we define $\mcV_s^d=\big\{V\subseteq \{ 1,\dots,d\}:|V|\le s\big\}$
and the set of binary vectors $\mcB_{s,m}^d=\big\{\eeta\in\{0,1\}^{\mcV_s^d}:|\eeta|_0=m\big\}$.  We also use the
notation $M_{d,s}\triangleq|\mcV_s^d|$. We extend these definitions to $s=0$ by setting $\mcV_0^d=\{\emptyset\}$,
$M_{d,0}=1$, $|\mcB_{0,1}^d|=1$, and $|\mcB_{0,m}^d|=0$ for $m>1$. For a vector $\ba$, we denote by $\supp(\ba)$
the set of indices of its non-zero coordinates. In particular, the support $\supp(\eeta)$ of a binary vector
$\eeta=\{\eta_V\}_{V\in \mcV_s^d} \in \mcB_{s,m}^d$ is the set of $V$'s such that $\eta_V=1$.

\subsection{Compound functional model}
In this paper we impose the following assumption on the unknown function $f$.
\begin{description}
\item[\bf Compound functional model:]\it There exists an integer $s\in\{ 1,\dots,d\}$, a binary sequence
$\eeta\in\mcB_{s,m}^d$, a set of functions $\{f_V\in L^2_0([0,1]^{|V|})\}_{V\in \mcV_s^d}$ and a constant $\bar f$ such that
\begin{equation}\label{HRM}
f(\bx)=\bar f+\sum_{V\in \mcV_s^d} f_V(\bx_V)\eta_V =\bar f+ \sum_{V\in\supp(\eeta)} f_V(\bx_V),\qquad\forall \bx\in\RR^d.
\end{equation}
The functions $f_V$ are called the atoms of the compound model.
\end{description}

Note that, under the compound model, $\bar f=\int_{[0,1]^d} f(\bx)\,d\bx$.

The atoms
$f_V$ are assumed to be sufficiently regular, namely, each $f_V$ is an element of a suitable functional class $\Sigma_V$. In particular,
one can consider a smoothness class $\Sigma_V$ and more specifically the Sobolev ball of functions of $s$
variables\footnote{Note that every function of less than $s$ variables can also be considered as a function of $s$ variables.}. In what follows, we will mainly deal with this example.

Given a collection $\SSigma=\{\Sigma_V\}_{V\in\mcV_s^d}$ of
subsets of $L^2_0([0,1]^s)$ and a subset $\tilde\mcB$ of $\mcB_{s,m}^d$, we define the classes
$$
\mcF_{s,m}(\SSigma)=\bigcup_{\eeta\in\tilde \mcB}\nolimits\mcF_{\eeta}(\SSigma),
$$
where
$$
\mcF_{\eeta}(\SSigma)=\Big\{f:\RR^d\to\RR : \exists \bar f\in\RR, \{f_V\}_{V\in\supp(\eeta)}, f_V\in\Sigma_V, \text{ such that }
f=\bar f+\sum_{V\in\supp(\eeta)} f_V \Big\}.
$$
The class $\mcF_{s,m}(\SSigma)$ is defined for any $s\in\{ 0,\dots,d\}$ and any $m \in\{0,\dots, M_{d,s}\}$.
In what follows, we assume that $\tilde\mcB$ is fixed and for this reason we do not include it in the notation.
Examples of $\tilde\mcB$ can be
the set of all $\eeta\in\mcB_{s,m}^d$ such that $V\in\supp(\eeta)$ are pairwise disjoint or of all $\eeta\in\mcB_{s,m}^d$ such that
every set $V$ from $\supp(\eeta)$ has a non-empty intersection with at most one other
set from $\supp(\eeta)$.

It is clear from the definition that the parameters $\big(\eeta,\{f_V\}_{V\in\supp(\eeta)}\big)$
are not identifiable. Indeed, two different collections $\big(\eeta,\{f_V\}_{V\in\supp(\eeta)}\big)$ and $\big(\bar\eeta,\{\bar f_V\}_{V\in\supp(\bar\eeta)}\big)$
may lead to the same compound function $f$. Of course, this is not necessarily an
issue as long as only the problem of estimating $f$ is considered.

We now define the Sobolev classes of functions of many variables that will play the role of $\Sigma_V$. Consider an orthonormal system of functions $\{\varphi_\bj\}_{\bj\in\ZZ^d}$
in $ L^2([0,1]^d)$ such that $\varphi_{\boldsymbol 0}(\bx)\equiv1$. We assume that the system $\{\varphi_\bj\}$ and
the set $\tilde \mcB$ are such that
\begin{equation}\label{3a}
\Big\|\sum_{V\in\supp(\eeta)} \sum_{\substack{\bj: \bj\not=\boldsymbol{0}\\ \supp(\bj)\subseteq V}} \theta_{\bj,V} \varphi_\bj \Big\|^2_2 \le C_*\sum_{V\in\supp(\eeta)}
\sum_{\substack{\bj: \bj\not=\boldsymbol{0}\\ \supp(\bj)\subseteq V}} \theta_{\bj,V}^2,
\end{equation}
for all $\eeta\in\tilde \mcB$ and all square-summable arrays $(\theta_{\bj,V}, \,(\bj,V)\in \ZZ^d\times\mcV_s^d)$,
where $C_*>0$ is a constant independent of $s,m$ and $d$. For example, this condition holds with  $C_*=1$ if $\tilde\mcB$ is
the set of all $\eeta\in\mcB_{s,m}^d$ such that $V\in\supp(\eeta)$ are pairwise disjoint and with $C^*=3/2$ if $\tilde\mcB$ is the set of all $\eeta\in\mcB_{s,m}^d$ such that
every set $V$ from $\supp(\eeta)$ has a non-empty intersection with at most one other
set from $\supp(\eeta)$.

One example of $\{\varphi_\bj\}_{\bj\in\ZZ^d}$ is a tensor product orthonormal basis:
\begin{equation}\label{trig}
\varphi_\bj(\bx)=\bigotimes_{\ell=1}^d\nolimits \varphi_{j_\ell}(x_\ell),\qquad
\end{equation}
where $\bj=(j_1,\dots,j_d)\in\ZZ^d$ is a multi-index and $\{\varphi_{k}\}, \,k\in \ZZ$, is
an orthonormal basis in $ L^2([0,1])$. Specifically, we can take the trigonometric basis
with $\varphi_{0}(u)\equiv 1$ on $[0,1]$, $\varphi_{k}(u)=\sqrt{2}\cos(2\pi\,k u)$ for $k>0$
and $\varphi_{k}(u)=\sqrt{2}\sin(2\pi\,k u)$ for $k<0$. To ease notation, we set
$\theta_\bj[{f}]=\langle {f},\varphi_\bj\rangle$ for $\bj\in\ZZ^d$.

For any set of indices  $V\subseteq\{1,\dots,d\}$ and any $\beta>0$, $L>0$, we define the
Sobolev class of functions
\begin{align}\label{Sobolev}
W_V(\beta,L)=\bigg\{g\in L^2_0([0,1]^{d}) :\quad g=\sum_{\bj\in\ZZ^d:\supp(\bj)\subseteq V}
\theta_\bj[g]\varphi_\bj\quad \text{and} \quad \sum_{\bj\in\ZZ^d} |\bj|_\infty^{2\beta}
\theta_\bj[g]^2\le L\bigg\}.
\end{align}
Assuming that $\{\varphi_\bj\}$ is the trigonometric basis and $f$ is periodic with period
one in each coordinate, \textit{i.e.}, $f(\bx+\bj)=f(\bx)$ for every $\bx\in\RR^d$ and every
$\bj\in\ZZ^d$, the condition $f_V\in W_{V}(\beta,L)$ can be interpreted as the square
integrability of all partial derivatives of $f_V$ up to the order $\beta$.

Let us give some examples of compound models.
\begin{itemize}
\item {\it Additive models} are the special case $s=1$ of compound models. Here, additive
models are understood in a wider sense than originally defined by  \citet{Stone85}. Namely,
for $s=1$ we have the model
$$
f(\bx)=\bar f+\sum_{j\in J} f_j(x_j), \qquad \bx=(x_1,\dots,x_d)\in\RR^d,
$$
where $J$ is any (unknown) subset of indices and not necessarily $J=\{1,\dots, d\}$. Estimation
and testing problems in this model when the atoms belong to some smoothness classes  have been
studied in \citet{Ingster_Lepski03,Meier_vdG_Buhlmann,Koltchinskii_Yuan10,Raskutti_et_al11,Gayraud_Ingster11,Suzuki}.
\item {\it Single atom models} are the special case $m=1$ of compound models. If $m=1$
    we have $f(\bx)=f_V(\bx_V)$ for some unknown $V\subseteq \{ 1,\dots,d\}$, \textit{i.e.},
    there exists only one set $V$ for which $\eta_V=1$, and $|V|\le s$. Estimation and
    variable selection in this model were considered by \citet{BertinLecue,CD11a,mosci_rosasco}.
    The case of small $s$ and large $d$ is particularly interesting in the context of sparsity.
    In a parametric model, when $f_V$ is a linear function, we are back to the sparse
    high-dimensional linear regression setting, which has been extensively studied, see, e.g.,
    \citet{Sara_Peter_book}.
\item {\it Tensor product models.} Let $\mcA$ be a given finite subset of $\ZZ$, and assume
    that $\varphi_\bj$ is a tensor product basis defined by~(\ref{trig}). Consider the following
    parametric class of functions
    \begin{align}\label{tensor}
    \bT_{\eeta}(\mcA)=\Big\{f:\RR^d\to\RR : \exists \bar f ,  \{\theta_{\bj,V}\}, \text{ such that }
    f=\bar f+\sum_{V\in\supp(\eeta)}\sum_{\bj\in\mcJ_{V,\mcA}} \theta_{\bj,V}\varphi_\bj \Big\},
    \end{align}
    where
    \begin{align}
    \mcJ_{V,\mcA}=\Big\{ \bj\in\mcA^d : \supp(\bj) \subseteq V \Big\}.
    \end{align}
    We say that function $f$ satisfies the tensor product model if it belongs to the set $\bT_\eeta(\mcA)$ for some $\eeta\in \tilde\mcB$.
    We define 
    $$
    \mcF_{s,m}(\bTA)=\bigcup_{\eeta\in\tilde \mcB}\nolimits \bT_\eeta(\mcA).
    $$
    Important examples are sparse high-dimensional multilinear/polynomial systems. Motivated respectively
    by applications in genetics and signal processing, they have been recently studied by \citet{Bobak_Novak}
    in the context of compressed sensing without noise and by \citet{Kekatos11} in the case where the
    observations are corrupted by a Gaussian noise. With our notation, the models they considered are the
    tensor product models with $\mcA=\{0,1\}$ (linear basis functions $\varphi_j$) in the multilinear model
    of \cite{Bobak_Novak} and $\mcA=\{-1,0,1\}$ in the Volterra filtering problem of \cite{Kekatos11}
    (second-order Volterra systems with $\varphi_0(x)\equiv1$, $\varphi_1(x)\propto (x-1/2)$ and
    $\varphi_{-1}(x)\propto x^2-x+1/6$). More generally, the set $\mcA$ should be of small cardinality to
    guarantee efficient dimension reduction.  Another approach is to introduce hierarchical structures on
    the coefficients of tensor product representation \cite{BRT2,Bach09}.
\end{itemize}

In what follows, we assume that $f$ belongs to the functional class $\mcF_{s,m}(\SSigma)$
where either $\SSigma=\{W_{V}(\beta,L)\}_{V\in\mcV_s^d}\triangleq\bW(\beta,L)$ or $\SSigma=\bTA$.

The compound model is described by three main parameters, which are the dimension $m$ that we call the {\it macroscopic} parameter and that characterizes the complexity of possible structure vectors $\eeta$,  the dimension $s$ of atoms in the compound that we call the {\it microscopic} parameter, and the complexity of functional class $\SSigma$. The latter can be described by entropy numbers of $\SSigma$ in convenient norms, and in the particular case of Sobolev classes, it is naturally characterized by the smoothness parameter $\beta$. The integers $m$ and $s$ are ``effective dimension" parameters. As soon as they grow, the structure becomes less pronounced and the compound model approaches the global nonparametric regression in dimension $d$, which is known to suffer from the curse of dimensionality already for moderate $d$. Therefore, an interesting case is the sparsity scenario where $s$ and/or $m$ are small.

\section{Overview of the results and relation to the previous work }
Several statistical problems arise naturally in the context of compound functional model.
\begin{description}
\item[\bf Estimation of $f$.] This is the subject of the present paper. We measure the risk of arbitrary estimator $\widetilde f_\eps$  by its mean integrated squared error $\Ex_f[\|\widetilde f_\eps-f\|_2^2]$ and we study the minimax risk
    $$
    \inf_{\widetilde f_\eps}\sup_{f\in \mcF_{s,m}(\SSigma)} \Ex_f[\| \widetilde f_\eps-f\|_2^2],
    $$
    where $\inf_{\widetilde f_\eps}$ denotes the minimum over all estimators\footnote{We focus our attention on the behavior of the expected error of estimation. Alternatively,
    one can be interested in establishing similar type of upper bounds on the error of estimation that hold true with large probability \cite{Dai_et_al,Lecue_Mend1}.}.
    A first general question is to establish the minimax rates
    of estimation, \textit{i.e.}, to find values $\psi_{s,m,\eps}(\SSigma)$ such that
    $$
    \inf_{\widetilde f_\eps}\sup_{f\in \mcF_{s,m}(\SSigma)} \Ex_f[\|\widetilde f_\eps-f\|_2^2] \asymp \psi_{s,m,\eps}(\SSigma),
    $$
    when $\SSigma$ is a Sobolev,
    H\"older or other class of functions.
    A second question is to construct optimal estimators in a minimax sense, \textit{i.e.}, estimators $\widehat f_\eps$ such that
    \begin{equation}\label{10}
 \sup_{f\in \mcF_{s,m}(\SSigma)} \Ex_f[\|\widehat f_\eps-f\|_2^2] \le C \psi_{s,m,\eps}(\SSigma),
   \end{equation}
 for some constant $C$ independent of  $s,m,\eps$ and $\SSigma$.
    Some results on minimax rates of estimation of $f$ are available only for the case $s=1$ (cf. the discussion below).
    Finally, a third question that we address here is whether the optimal rate can be attained adaptively, \textit{i.e.},
    whether one can construct an estimator $\widehat f_\eps$  that satisfies (\ref{10}) simultaneously for all $s,m,\beta$
    and $L$ when $\SSigma=\bW(\beta,L)$. We will show that the answer to this question is positive.
    \medskip
\item[\bf Variable selection.] Assume that $m=1$. This means that $f(\bx)=f_V(\bx_V)$ for some unknown
    $V\subseteq \{ 1,\dots,d\}$, \textit{i.e.}, there exists only one set $V$ for which $\eta_V=1$ (a single atom model). Then
    it is of interest to identify $V$ under the constraint $|V|\le s$. In particular, $d$
    can be very large while $s$ can be small. This corresponds to estimating the relevant covariates
    and generalizes the problem of selection of sparsity pattern in linear regression. An estimator
    $\widehat V_n\subseteq \{ 1,\dots,d\}$ of $V$ is considered as good, if the probability $\Pb(\widehat V_n=V)$ is
    close to one. \medskip
\item[\bf Hypotheses testing (detection):] The problem is to test the hypothesis $H_0: f\equiv 0$
    (no signal) against the alternative $H_1 : f\in \mcA$, where $\mcA=\big\{f\in\mcF_{s,m}(\SSigma): \|f\|_2\ge r\big\}$.
    Here, it is interesting to characterize the minimax rates of separation $r>0$ in terms of $s$, $m$ and $\SSigma$.
\end{description}

Some of the above three problems have been studied
in the literature for special cases $s=1$ (additive model) and $m=1$ (single atom model). \citet{Ingster_Lepski03} studied the problem of testing in additive model
and provided asymptotic minimax rates of separation. Sharp asymptotic optimality under additional assumptions in the same problem
was obtained by \citet{Gayraud_Ingster11}. Recently, \citet{CD11a} established tight conditions for
variable selection in the single atom model. We also mention an earlier work of \citet{BertinLecue} dealing with variable selection.

The problem of estimation has been also considered for additive model and class $\SSigma$ defined as a reproducing kernel Hilbert space, cf. \citet{Koltchinskii_Yuan10,Raskutti_et_al11}. In particular, these papers showed that if $s=1$ and
$\SSigma=\bW(\beta,L)$ is a Sobolev class, then there is an estimator of $f$ for which
the mean integrated squared error converges to zero at the rate
\begin{align} \label{rate1}
\max\Big(m\eps^{4\beta/(2\beta+1)},\;m\eps^2\log d\Big).
\end{align}
Furthermore, \citet[Thm.\ 2]{Raskutti_et_al11} provided the following  lower bound on the minimax risk:
\begin{align} \label{rate2}
\max\Big(m\eps^{4\beta/(2\beta+1)},\;m\eps^2\log \Big(\frac{d}{m}\Big)\Big).
\end{align}
Note that when $m$ is proportional to $d$, this lower bound departs from the upper
bound in a logarithmic way. It should also be noted that the upper bounds in these papers are achieved by
estimators that are not adaptive in the sense that they require the knowledge of the smoothness index $\beta$.

In this paper, we establish non-asymptotic upper and lower bounds on the minimax risk for the  model
with Sobolev smoothness class $\SSigma=\bW(\beta,L)$. We will prove that, up to a multiplicative constant, the minimax risk behaves itself as
\begin{align} \label{rate3}
\max\bigg\{mL^{s/(2\beta+s)}\eps^{4\beta/(2\beta+s)},\;ms\eps^2\log \bigg(\frac{d}{sm^{1/s}}\bigg)\bigg\}\wedge  L
\end{align}
(we assume here $d/(sm^{1/s})>1$, otherwise a constant factor greater than 1 should be inserted under the logarithm, cf. the results below).
In addition, we demonstrate that this rate can be reached in an adaptive way that is without the knowledge
of $\beta$, $s$, and $m$. The rate  (\ref{rate3}) is non-asymptotic, which explains, in particular, the presence of minimum with constant $L$ in (\ref{rate3}).
For $s=1$, \textit{i.e.}, for the additive regression model, our rate matches the lower bound of \cite{Raskutti_et_al11}.

For $m=1$, \textit{i.e.}, when $f(\bx)=f_V(\bx_V)$ for some unknown $V\subseteq \{ 1,\dots,d\}$ (the single atom model), the minimax rate
of convergence takes the form
\begin{align} \label{rate4}
\max\bigg\{L^{s/(2\beta+s)}\eps^{4\beta/(2\beta+s)},\;s\eps^2\log \bigg(\frac{d}{s}\bigg)\bigg\}\wedge  L.
\end{align}
This rate accounts for two effects, namely, the accuracy of nonparametric estimation of $f$ for fixed macroscopic structure parameter $\eeta$, cf. the first term $\sim \eps^{4\beta/(2\beta+s)}$, and the complexity of the structure itself (irrespective to the nonparametric nature of microscopic components $f_V(\bx_V)$).  In particular, the second term $\sim s\eps^2\log ({d}/{s})$ in (\ref{rate4}) coincides with the optimal rate of prediction in linear regression model under the standard sparsity assumption. This is what we obtain in the limiting case when $\beta$ tends to infinity. It is important to note that the optimal rates depend only logarithmically on the ambient dimension $d$.  Thus, even if $d$ is large, the rate optimal estimators achieve nice performance under the sparsity scenario when $s$ and $m$ are small.

\section{The estimator and upper bounds on the minimax risk}\label{sec:upper}

In this section, we suggest an estimator attaining the
minimax rate. It is constructed in the following two steps.
\begin{description}
\item[\bf Constructing weak estimators.] At this step, we proceed as if the macroscopic structure parameter $\eeta$
was known and denote by $V_1,\ldots,V_m$ the elements of the support of $\eeta$. The goal is to provide for each $\eeta$ a family of
``simple'' estimators of $f$---indexed by some parameter $\bt$---containing a rate-minimax one.  To this end, we first project $\bY$ onto
the basis functions $\{\varphi_\bj:{|\bj|}_\infty\le \eps^{-2}\}$ and denote
\begin{equation}\label{GSM}
\bY_\eps=(Y_\bj\triangleq Y(\varphi_\bj): \bj\in\ZZ^d, \, {|\bj|}_\infty\le \eps^{-2}).
\end{equation}
Then, we consider a collection $\{\widehat\ttheta_{\bt,\eeta}:\ \bt\in \ZZ^m\cap [1,\eps^{-2}]^m\}$ of projection estimators of
the vector $\ttheta_\eps=(\theta_\bj[f])_{\bj\in\ZZ^d: |\bj|_\infty\le \eps^{-2}}$. The role of each component $t_\ell$ of $\bt$ is
to indicate the cut-off level of the coefficients $\theta_{\bj}$ corresponding to the atom $f_{V_\ell}$, that is the level of indices
beyond of which the coefficients are estimated by $0$.

To be more precise, for an integer-valued vector $\bt=(t_{V_\ell}, \ell=1,\dots, m)\in[0, \eps^{-2}]^m$ 
we set
$\widehat\ttheta_{\bt,\eeta}=(\widehat\theta_{\bt,\eeta,\bj}:\bj\in\ZZ^d, \,{|\bj|}_\infty\le \eps^{-2})$,
where $\widehat\theta_{\bt,\eeta,\boldsymbol0}=Y_{\boldsymbol0}$ and
$$
\widehat\theta_{\bt,\eeta,\bj}=
\begin{cases}
Y_\bj, & \exists \ell \text{ s.\,t. }\supp(\bj)\subseteq V_\ell\, , \, {|{\bj}|}_\infty\in[1, t_{V_\ell}],  \\
0, & \text{otherwise}
\end{cases}
$$
if $\bj\ne\boldsymbol0$. Based on these estimators of the coefficients of $f$, we recover the function $f$ using the estimator
$$
\widehat f_{\bt,\eeta}(\bx)=\sum_{\bj\in\ZZ^d:{|\bj|}_\infty\le \eps^{-2}}\nolimits \widehat\theta_{\bt,\eeta,\bj} \varphi_\bj(\bx).
$$
\end{description}


\begin{description}
\item[\bf Smoothness- and structure-adaptive estimation:]
The goal in this step is to combine the weak estimators $\{\widehat f_{\bt,\eeta}\}_{\bt,\eeta}$ in order
to get a structure and smoothness adaptive estimator of $f$ with a risk which is as small as possible.
To this end, we use a version of exponentially weighted aggregate \citep{Leung_Barron06,DT08,DT11} in the spirit
of sparsity pattern aggregation as described in~\cite{RigTsy11,RigTsy12}. More precisely, for every
pair of integers $(s,m)$ such that $s\in\{1,\ldots,d\}$ and $m\in\{1,\ldots,M_{d,s}\}$, we define prior probabilities for
$(\bt,\eeta)\in\llb0,\eps^{-2}\rrb^m\times (\mcB_{s,m}^d\setminus\mcB_{s-1,m}^d)$  by
\begin{align}\label{prior}
\pi_{\bt,\eeta}=\frac{2^{-sm}}{H_d(1+[\eps^{-2}])^m|\mcB_{s,m}^d\setminus\mcB_{s-1,m}^d|},\qquad
H_d=\sum_{s=0}^d\sum_{m=1}^{M_{d,s}} 2^{-sm}\le  e.
\end{align}
For $s=0$ and the unique $\eeta_0\in\mcB_{0,1}^d$ we consider only one weak estimator $\widehat\ttheta_{\bt,\eeta_0}$ with all entries
zero except for the entry $\widehat\theta_{\bt,\eeta_0,\boldsymbol 0}$,  which is equal to $Y_{\boldsymbol0}$. We set $\pi_{\bt,\eeta_0}=1/H_d$.
It is easy to see that $\ppi=\Big(\pi_{\bt,\eeta};(\bt,\eeta)\in \bigcup_{s,m} \{\llb0,\eps^{-2}\rrb^m\times \mcB_{s,m}^d\}\Big)$
defines a probability distribution.
For any pair $(\bt,\eeta)$ we
introduce the penalty function $$\text{pen}(\bt,\eeta)=2\eps^2\prod_{V\in\supp(\eeta)} (2t_V+1)^{|V|}$$ and  define the vector  of  coefficients
$\widehat \ttheta_\eps=(\widehat \theta_{\eps,\bj}:\bj\in\ZZ^d, \, {|\bj|}_\infty\le \eps^{-2})$ by
\begin{equation}
\widehat \ttheta_\eps=\sum_{s=1}^d\sum_{m=1}^{M_{d,s}}\sum_{(\bt,\eeta)} \widehat\ttheta_{\bt,\eeta}
\frac{\exp\big\{-\frac1{4\eps^2}\big(|\bY_\eps-\widehat\ttheta_{\bt,\eeta}|_2^2+\text{pen}(\bt,\eeta)\big)\big\}\pi_{\bt,\eeta}}
{\sum_{\bar s=1}^d\sum_{\bar m=1}^{M_{d,\bar s}}\sum_{(\bar\bt,\bar\eeta)}\exp\big\{-\frac1{4\eps^2}
\big(|\bY_\eps-\widehat\ttheta_{\bar\bt,\bar\eeta}|_2^2+\text{pen}(\bar\bt,\bar\eeta)\big)\big\}\pi_{\bar\bt,\bar\eeta}},
\end{equation}
where the summations $\sum_{(\bt,\eeta)}$ and $\sum_{(\bar\bt,\bar\eeta)}$ correspond to $(\bt,\eeta)
\in \llb0,\eps^{-2}\rrb^m\times(\mcB^d_{s,m}\setminus \mcB^d_{s-1,m})$ and $(\bar\bt,\bar\eeta)\in \llb0,\eps^{-2}\rrb^{\bar m}\times
(\mcB^d_{\bar s,\bar m}\setminus\mcB^d_{\bar s-1,\bar m})$, respectively. The final estimator of $f$ is
$$
\widehat f_\eps(\bx)=\sum_{\bj\in\ZZ^d:{|\bj|}_\infty\le \eps^{-2}}\nolimits \widehat\theta_{\eps,\bj}\varphi_\bj(\bx),
\qquad\forall\bx\in [0,1]^d.
$$
\end{description}
Note that each $\widehat\ttheta_{\bt,\eeta}$ is a projection estimator of the vector $\ttheta=(\theta_\bj[f])_{\bj\in\ZZ^d}$. Hence,
$\widehat f_\eps$ is a convex combination of projection estimators. We also note that, to construct $\widehat f_\eps$, we only need to know
$\eps$ and $d$. Therefore, the estimator is adaptive to all other parameters of the model, such as $s$, $m$, the parameters that define the class $\SSigma$ and the choice of a particular subset $\tilde\mcB$ of $\mcB^d_{s,m}$.

The following theorem gives an upper bound on the risk of the estimator $\widehat f_\eps$ when $\SSigma=\bW(\beta,L)$.

\begin{theorem}\label{thm:1}
Let $\beta>0$ and $L>0$ be such that $\log(\eps^{-2})\ge (2\beta)^{-1}\log(L)$, $L>\eps^{2}\log(e\eps^{-2})^{\frac{2\beta+s}{s}}$. Let $\tilde\mcB$ be any subset of $\mcB^d_{s,m}$. Assume that condition (\ref{3a}) holds. Then, for some constant $C({\beta})>0$ depending only on $\beta$ we have
\begin{align}\label{upper}
\sup_{f\in \mcF_{s,m}(\bW(\beta,L))}\Ex_f[\|\widehat f_\eps-f\|_2^2]\le (6L)\wedge \Big(m\Big\{ C({\beta})L^{\frac{s}{2\beta+s}}
\eps^{\frac{4\beta}{2\beta+s}}+4s\eps^2\log\Big(\frac{2e^3d}{sm^{1/s}}\Big)\Big\}\Big)\,.
\end{align}
\end{theorem}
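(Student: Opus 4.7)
The estimator $\widehat f_\eps$ is a penalized exponentially weighted aggregate (EWA) of projection-type weak estimators in a Gaussian sequence model, so the natural route is a Leung--Barron/PAC-Bayesian oracle inequality combined with a bias-variance balance at a well-chosen pair $(\bt,\eeta)$. I would first reduce to a finite Gaussian sequence model by retaining only the coefficients $(Y_\bj)_{|\bj|_\infty\le\eps^{-2}}$: the truncation tail $\sum_{|\bj|_\infty>\eps^{-2}}\theta_\bj[f]^2$ is of order $L\eps^{4\beta}$ by the Sobolev condition, which under $\log\eps^{-2}\ge(2\beta)^{-1}\log L$ is absorbed into the target rate. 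Then the standard EWA oracle inequality with temperature $4\eps^2$ and the penalty chosen as twice the variance of the projection estimator yields
\begin{equation*}
\Ex_f[\|\widehat f_\eps-f\|_2^2]\le \inf_{(\bt,\eeta)}\Big\{\|f-\Pi_{\bt,\eeta}f\|_2^2+\mathrm{pen}(\bt,\eeta)+4\eps^2\log(1/\pi_{\bt,\eeta})\Big\},
\end{equation*}
where $\Pi_{\bt,\eeta}$ is the orthogonal projector onto the span of the basis functions retained by the weak estimator $\widehat f_{\bt,\eeta}$.

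Fix $f\in\mcF_{s,m}(\bW(\beta,L))$ with decomposition $f=\bar f+\sum_{V\in\supp(\eeta^\star)}f_V$, $f_V\in W_V(\beta,L)$, $\eeta^\star\in\tilde\mcB$, and evaluate the oracle bound at the pair $(\bt^\star,\eeta^\star)$ with common cutoff $t_V^\star=\lceil(L/\eps^2)^{1/(2\beta+s)}\rceil$. By condition~(\ref{3a}),
\begin{equation*}
\|f-\Pi_{\bt^\star,\eeta^\star}f\|_2^2\le C_*\!\sum_{V\in\supp(\eeta^\star)}\sum_{\substack{\supp(\bj)\subseteq V\\ |\bj|_\infty>t_V^\star}}\!\theta_\bj[f_V]^2\le C_*L\sum_V (t_V^\star)^{-2\beta}\le C(\beta)\,mL^{s/(2\beta+s)}\eps^{4\beta/(2\beta+s)},
\end{equation*}
and the penalty is of the same order, since the effective dimension of the retained block is of order $m(t^\star)^s$ and the choice of $t^\star$ is made precisely to balance bias against variance. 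The theorem's hypotheses on $L$ and $\eps$ ensure $t^\star\in\llb1,\eps^{-2}\rrb$, so $(\bt^\star,\eeta^\star)$ lies in the feasible range for the infimum.

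The log-prior term reduces to combinatorial estimates. Using $M_{d,s}\le(ed/s)^s$ and $|\mcB_{s,m}^d|\le\binom{M_{d,s}}{m}\le(eM_{d,s}/m)^m$ gives $\log|\mcB_{s,m}^d|\le ms\log(ed/(sm^{1/s}))+O(m)$, and combining with the $sm\log 2$ and $m\log(1+[\eps^{-2}])$ contributions to $\log(1/\pi_{\bt^\star,\eeta^\star})$ yields $4\eps^2\log(1/\pi_{\bt^\star,\eeta^\star})\le 4ms\eps^2\log(2e^3d/(sm^{1/s}))$ up to lower-order terms that are absorbed into the nonparametric rate by the lower bound on $L$. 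Summing the three contributions gives the second branch of (\ref{upper}).

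Finally, the $6L$ fallback follows by evaluating the oracle inequality at a very low-complexity pair (e.g.\ the one where only $Y_{\boldsymbol 0}$ is retained), combined with the uniform bound on $\|f\|_2^2$ coming from the Sobolev constraint $\|f_V\|_2^2\le L$ and condition~(\ref{3a}). The main technical obstacle is the combinatorial bookkeeping in the log-prior term: obtaining the precise $\log(d/(sm^{1/s}))$ factor (rather than a looser $\log(d/s)+\log(M_{d,s}/m)$) is essential for matching the minimax lower bound in (\ref{rate3}); the EWA oracle inequality and the bias-variance balance for Sobolev atoms are otherwise standard.
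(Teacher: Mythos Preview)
Your proposal is correct and follows essentially the same route as the paper: apply the Leung--Barron oracle inequality for the exponentially weighted aggregate, evaluate it at the ``true'' structure $\eeta^\star$ with a Sobolev-balancing cutoff, control the log-prior via the combinatorial bound $|\mcB_{s,m}^d|\le(e^2d/(sm^{1/s}))^{ms}$, absorb the $m\eps^2\log(\eps^{-2})$ term into the nonparametric rate using the lower bound on $L$, and obtain the $6L$ fallback from the trivial estimator $\widehat f_{\bt,\eeta_0}\equiv Y_{\boldsymbol 0}$. The only cosmetic differences are that the paper states the oracle inequality in the form $\Ex_f\|\widehat f_{\bt,\eeta}-f\|_2^2+4\eps^2\log(1/\pi_{\bt,\eeta})$ rather than your bias$^2+\mathrm{pen}+4\eps^2\log(1/\pi)$ version, and the paper's Lemma~\ref{lem:1} uses the dimension-adaptive cutoff $t_{V_\ell}=[(L/(3^{|V_\ell|}\eps^2))^{1/(2\beta+|V_\ell|)}\wedge\eps^{-2}]$ rather than your common cutoff $t^\star=\lceil(L/\eps^2)^{1/(2\beta+s)}\rceil$; both choices give the same rate up to constants.
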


\begin{proof}
Since the functions $\varphi_\bj$ are orthonormal, $\bY_\eps$ is composed of independent Gaussian
random variables with common variance equal to $\eps^2$. Thus, the array $\bY_\eps$ defined by (\ref{GSM})
obeys the Gaussian sequence model studied in \cite{Leung_Barron06}. Therefore, using Parseval's theorem and
\cite[Cor. 6]{Leung_Barron06} we obtain that the estimator $\widehat f_\eps$ satisfies, for all $f$,
\begin{align}\label{oracle1}
\Ex_f[\|\widehat f_\eps-f\|_2^2]
		&\le \min_{\bt,\eeta}\Big(\Ex_f[\|\widehat f_{\bt,\eeta}-f\|_2^2]+4\eps^2\log(\pi_{\bt,\eeta}^{-1})\Big),
\end{align}
where the minimum is taken over all $(\bt, \eeta)\in \bigcup_{s,m} \{\llb0,\eps^{-2}\rrb^m\times \mcB_{s,m}^d\}$. Denote by $\eeta_0$ the unique element
of $\mcB_{0,1}^d$ for which $\supp(\eeta)=\{\emptyset\}$. The corresponding estimator $\widehat f_{\bt,\eeta_0}$ coincides with
the constant function equal to $Y_{\boldsymbol 0}$ and its risk is bounded by $\eps^2+L$ for all $f\in  \mcF_{s,m}(\bW(\beta,L))$. Therefore,
\begin{align}\label{11}
\sup_{f\in \mcF_{s,m}(\bW(\beta,L))}\Ex_f[\|\widehat f_\eps-f\|_2^2]
		&\le \sup_{f\in \mcF_{s,m}(\bW(\beta,L))}\Ex_f[\|\widehat f_{\bt,\eeta_0}-f\|_2^2]+4\eps^2\log(\pi_{\bt,\eeta_0}^{-1})\nonumber\\
		&\le \eps^2+L+4\eps^2\le 6L.
\end{align}
Take now any $f\in \mcF_{s,m}(\bW(\beta,L))$, and let $\eeta^*\in\tilde\mcB\subseteq\mcB_{s,m}^d$ be such that $f\in \mcF_{\eeta^*}(\bW(\beta,L))$. Then it follows from (\ref{oracle1}) that
\begin{align}\label{oracle2}
\Ex_f[\|\widehat f_\eps-f\|_2^2]
		&\le \min_{\bt\in\llb0,\eps^{-2}\rrb^m}\Big(\Ex_f[\|\widehat f_{\bt,\eeta^*}-f\|_2^2]+4\eps^2\log(\pi_{\bt,\eeta^*}^{-1})\Big)\nonumber\\
		&\le \min_{\bt\in\llb0,\eps^{-2}\rrb^m}\Ex_f[\|\widehat f_{\bt,\eeta^*}-f\|_2^2]+4\eps^2 \big(m\log(2\eps^{-2})+ms\log(2)+\log(e|\mcB_{s,m}^d|)\big).
\end{align}
Note that for all
$d,s\in\NN$ such that $s\le d$ we have
\begin{align}\label{eq:1}
M_{d,s}=\sum_{\ell=0}^s \binom{d}{\ell}\le \bigg(\frac{ed}{s}\bigg)^{s}\quad \mbox{and}
\quad |\mcB_{s,m}^d|\le \binom{M_{d,s}}{m}\le \bigg(\frac{eM_{d,s}}{m}\bigg)^m\le \bigg(\frac{e^2d}{sm^{1/s}}\bigg)^{ms}.
\end{align}
Also, we have the following bound on the risk of estimator $\widehat f_{\bt,\eeta}$ for each $\eeta\in\tilde\mcB$ and for an appropriate choice of
the bandwidth parameter $\bt\in\llb0,\eps^{-2}\rrb^m$.

\begin{lemma}\label{lem:1}
Let $\beta>0$,  $L\ge \eps^{2}$ be such that $\log(\eps^{-2})\ge (2\beta)^{-1}\log(L)$. Let $\bt\in\llb0,\eps^{-2}\rrb^m$
be a vector with integer coordinates
$t_{V_\ell}=[(L/(3^{|V_\ell|}\eps^2))^{1/(2\beta+|V_\ell|)}\wedge \eps^{-2}]$, $\ell=1,\dots,m$.
Assume that condition (\ref{3a}) holds. Then
\begin{align}\label{eq:lem1}
 \sup_{f\in \mcF_{\eeta}(\bW(\beta,L))}\Ex_f[\|\widehat f_{\bt,\eeta}-f\|_2^2]
	&\le 2C_* 3^{2\beta\wedge s}\, m\,L^{s/(2\beta+s)}\eps^{4\beta/(2\beta+s)},\qquad \forall\; \eeta\in\tilde\mcB\subset{\mcB^d_{s,m}}.
\end{align}
\end{lemma}
Proof of this lemma is given in the appendix.

Combining (\ref{oracle2}) with (\ref{eq:1}) and (\ref{eq:lem1}) yields the following
upper bound on the risk of $\widehat f_\eps$ :
$$
\sup_{f\in \mcF_{s,m}(\bW(\beta,L))}\Ex_f[\|\widehat f_\eps-f\|_2^2]\le m\bigg\{ C_{\beta}L^{\frac{s}{2\beta+s}}
\eps^{\frac{4\beta}{2\beta+s}}+4\eps^2\log(2\eps^{-2})
+4s\eps^2\log\bigg(\frac{2e^3d}{sm^{1/s}}\bigg)\bigg\}
$$
where $C_{\beta}>0$ is a constant depending only on $\beta$. The assumptions of the theorem guarantee that $
\eps^2\log(2\eps^{-2})\le L^{\frac{s}{2\beta+s}}
\eps^{\frac{4\beta}{2\beta+s}}$, so that the desired result follows from (\ref{11}) and the last display.

\end{proof}

The behavior of the estimator $\widehat f_\eps$ in the case $\SSigma=\bTA$ is described in the next theorem.
\begin{theorem}\label{thm:2}
Assume that $k=\max\{|\ell|: \,  \ell \in \mcA\}<\eps^{-2}$. Then
\begin{align}\label{upper:tensor}
\sup_{f\in \mcF_{s,m}(\bTA)}\Ex_f[\|\widehat f_\eps-f\|_2^2]\le
 m\eps^2 \Big\{ (2k+1)^s
+4\log(2\eps^{-2})+4s\log\Big(\frac{2e^3d}{sm^{1/s}}\Big)\Big\}\,.
\end{align}
\end{theorem}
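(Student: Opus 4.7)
The plan is to mimic the proof of Theorem~\ref{thm:1}, replacing Lemma~\ref{lem:1} by a direct bias/variance computation that exploits the parametric nature of the tensor product class.

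First I would invoke the oracle inequality~(\ref{oracle1}) of Leung--Barron for the aggregate $\widehat f_\eps$, specialized to the single pair $(\bt,\eeta^*)$, where $\eeta^*\in\tilde\mcB$ is any structure such that $f\in \bT_{\eeta^*}(\mcA)$ and $\bt=(k,\dots,k)\in\llb 0,\eps^{-2}\rrb^m$ (this is a valid choice because of the hypothesis $k<\eps^{-2}$). This yields
\[
\Ex_f[\|\widehat f_\eps-f\|_2^2]\le \Ex_f[\|\widehat f_{\bt,\eeta^*}-f\|_2^2]+4\eps^2\log(\pi_{\bt,\eeta^*}^{-1}).
\]

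Next I would handle the risk of the weak estimator $\widehat f_{\bt,\eeta^*}$. The point is that every index $\bj$ appearing in the tensor product expansion of $f$ satisfies $\bj\in\mcA^d$ with $\supp(\bj)\subseteq V$ for some $V\in\supp(\eeta^*)$, hence $|\bj|_\infty\le k$ and $\supp(\bj)\subseteq V_\ell$ for some $\ell$. Therefore with $t_{V_\ell}=k$ the estimator $\widehat f_{\bt,\eeta^*}$ retains raw observations $Y_\bj$ at all indices on which $\theta_\bj[f]$ can be non-zero, so its bias vanishes. By Parseval the risk equals $\eps^2$ times the number of retained coefficients, which is at most
\[
1+\sum_{\ell=1}^m\bigl((2k+1)^{|V_\ell|}-1\bigr)\le m(2k+1)^s
\]
(using $|V_\ell|\le s$ and $m\ge 1$). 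Hence $\Ex_f[\|\widehat f_{\bt,\eeta^*}-f\|_2^2]\le m(2k+1)^s\eps^2$.

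Finally I would bound the complexity term. From~(\ref{prior}) together with $H_d\le e$ and the combinatorial bound $|\mcB_{s,m}^d|\le (e^2 d/(sm^{1/s}))^{ms}$ from~(\ref{eq:1}), and using $1+[\eps^{-2}]\le 2\eps^{-2}$,
\[
\log(\pi_{\bt,\eeta^*}^{-1})\le sm\log 2+1+m\log(2\eps^{-2})+ms\log\!\Big(\tfrac{e^2d}{sm^{1/s}}\Big).
\]
Using $1\le m$ and absorbing the additive constants by passing from $e^2$ to $e^3$ in the logarithm (since $s\log 2+1\le s\log e +s\log e = s$ added to $s\log(e^2d/(sm^{1/s}))$ gives $s\log(2e^3d/(sm^{1/s}))$), this simplifies to $\log(\pi_{\bt,\eeta^*}^{-1})\le m\log(2\eps^{-2})+ms\log(2e^3d/(sm^{1/s}))$. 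Plugging the two bounds into the oracle inequality yields~(\ref{upper:tensor}).

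I do not expect any real obstacle: the parametric structure of $\bT_\eeta(\mcA)$ trivializes the bias analysis that was the content of Lemma~\ref{lem:1}, so the proof reduces to bookkeeping of the prior weights already carried out in Theorem~\ref{thm:1}. The only point requiring care is to verify that the choice $\bt=(k,\dots,k)$ really captures every non-zero Fourier coefficient of $f$, which hinges on the hypothesis $k<\eps^{-2}$ (so $t_V$ is admissible and all relevant $\bj$ lie in $\bY_\eps$).
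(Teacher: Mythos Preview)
Your proposal is correct and follows essentially the same route as the paper: choose $\eeta^*$ with $f\in\bT_{\eeta^*}(\mcA)$ and $\bt^*=(k,\dots,k)$, apply the Leung--Barron oracle inequality at this single pair, note that the parametric structure makes the bias vanish so the weak-estimator risk is at most $m\eps^2(2k+1)^s$, and bound $\log(\pi_{\bt^*,\eeta^*}^{-1})$ via~(\ref{eq:1}) exactly as in Theorem~\ref{thm:1}. The only quibble is that your parenthetical ``$s\log 2+1\le s\log e+s\log e=s$'' is garbled (the right-hand side is $2s$, and what you actually need and use is $s\log 2+1\le s\log 2+s=s\log(2e)$), but the resulting inequality and the final bound are correct.
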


Proof of Theorem \ref{thm:2} follows the same lines as that of Theorem~\ref{thm:1}.  We take $f\in \mcF_{s,m}(\bTA)$, and let $\eeta^*\in\tilde\mcB\subseteq\mcB_{s,m}^d$ be such that $f\in \mcF_{\eeta^*}(\bTA)$. Let $\bt^*\in \RR^m$ be the vector with all coordinates equal to $k$. Then the same argument as in (\ref{oracle2}) yields
\begin{align}\label{oracle4}
\Ex_f[\|\widehat f_\eps-f\|_2^2]
				&\le \Ex_f[\|\widehat f_{\bt^*,\eeta^*}-f\|_2^2]+4\eps^2 \big(m\log(2\eps^{-2})+ms\log(2)+\log(e|\mcB_{s,m}^d|)\big).
\end{align}
We can write ${\rm supp}(\eeta^*)=\{V_1,\dots,V_m\}$ where $|V_\ell|\le s$.  Since the model is parametric, there is no bias term in the expression for the risk on the right hand side of  (\ref{oracle4}) and we have (cf. (\ref{eq:4})):
\begin{align*}\label{oracle5}
\Ex_f[\|\widehat f_{\bt^*,\eeta^*}-f\|_2^2]& \le
\sum_{\ell=1}^m\sum_{\bj:\supp(\bj)\subseteq V_\ell}\eps^2\1_{\{{|\bj|}_\infty\le k\}}
		\le m\eps^2(2k+1)^{s}.
\end{align*}
Together with  (\ref{oracle4}), this implies (\ref{upper:tensor}).

The bound of Theorem~\ref{thm:2} is particularly interesting when $k$ and $s$ are small. For the examples of multilinear and polynomial systems \cite{Bobak_Novak,Kekatos11} we have $k=1$. We also note that the result is much better than what can be obtained by using the Lasso. Indeed, consider the simplest case of single atom tensor product model ($m=1$).  Since we do not know $s$, we need to run the Lasso in the dimension $p=k^d$ and we can only guarantee the rate $\eps^2\log p=d \eps^2\log k$, which is linear in the dimension $d$. If $d$ is very large and $s\ll d$, this is much slower than the rate of Theorem~\ref{thm:2}.

\section{Lower bound}

In this section, we prove a minimax lower bound on the risk of any estimator over the class $\mcF_{s,m}(\bW(\beta,L))$. We will assume that $\{\varphi_\bj\}$ is the tensor-product trigonometric basis and $\tilde \mcB= \tilde\mcB_{s,m}^d$ where $\tilde\mcB_{s,m}^d$ denotes the set of all $\eeta\in \mcB_{s,m}^d$ such that the sets $V\in {\rm \supp}(\eeta)$ are disjoint. Then condition (\ref{3a}) holds with equality and $C_*=1$. We will split the proof into two steps. First, we establish a lower bound
on the minimax risk in the case of known structure $\eeta$, \textit{i.e.}, when $f$ belongs
to the class $\mcF_\eeta(\bW(\beta,L))$ for some known parameters $\eeta\in \tilde \mcB$ and  $\beta,L>0$. We will
show that the minimax risk tends to zero with the rate not faster than $m\eps^{4\beta/(2\beta+s)}$. In a second step, we will
prove that if $\eeta$ is unknown, then the minimax rate is bounded from below by $ms\eps^2(1+\log(d/(sm^{1/s})))$ if
the function $f$ belongs to $\mcF_\eeta(\Theta)$ for a set $\Theta$ spanned by the tensor products involving only the functions $\varphi_1$ and $\varphi_{-1}$ of various arguments.

\subsection{Lower bound for known structure $\eeta$}

\begin{proposition}\label{prop1}
Let $\{\varphi_\bj\}$ be the tensor-product trigonometric basis and let $s,m,d$ be positive integers satisfying $d\ge sm$.
Assume that $L\ge \eps^2$. Then there exists an absolute constant $C>0$ such that
$$
\inf_{\widehat f}\sup_{f\in \mcF_{\eeta}(\bW(\beta,L))}\Ex_f[\|\widehat f-f\|_2^2]\ge CmL^{s/(2\beta+s)}\eps^{4\beta/(2\beta+s)},\qquad
\forall\,\eeta\in\tilde \mcB_{s,m}^d.
$$
\end{proposition}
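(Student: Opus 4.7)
The plan is to construct, for the fixed structure $\eeta$ with disjoint supports $V_1,\dots,V_m$ (which without loss of generality may be taken all of size exactly $s$, since any smaller atoms can only shrink the hypothesis class), an exponential-size packing subset of $\mcF_\eeta(\bW(\beta,L))$ by a product-of-perturbations construction supported on distinct frequency blocks, and then apply a standard Fano-type bound (e.g.\ Tsybakov, Theorem~2.5).

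The hypothesis construction is as follows. Choose $N=\lfloor c_0 (L/\eps^2)^{1/(2\beta+s)}\rfloor$ for a small absolute constant $c_0$, and for each $\ell$ define
$$
\mcJ_\ell=\bigl\{\bj\in\ZZ^d\,:\,j_k\in\{1,\dots,N\}\ \text{for}\ k\in V_\ell,\ j_k=0\ \text{for}\ k\notin V_\ell\bigr\},
$$
so that $|\mcJ_\ell|=N^s$, and, because the $V_\ell$ are pairwise disjoint, the multi-indices from different $\mcJ_\ell$'s differ, hence $\{\varphi_\bj:\bj\in\bigcup_\ell\mcJ_\ell\}$ is an orthonormal family. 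For each $\omega=(\omega_{\ell,\bj})\in\{0,1\}^{mN^s}$ set
$$
f_\omega=\alpha\sum_{\ell=1}^m\sum_{\bj\in\mcJ_\ell}\omega_{\ell,\bj}\,\varphi_\bj,\qquad \alpha^2=c_1 L\, N^{-(2\beta+s)},
$$
with $c_1\le 1$ to be fixed. Since $|\bj|_\infty\le N$ on $\mcJ_\ell$, each $\ell$-block satisfies the Sobolev bound $\sum |\bj|_\infty^{2\beta}\alpha^2\le \alpha^2 N^{2\beta+s}=c_1L\le L$, so taking $\bar f=0$ yields $f_\omega\in\mcF_\eeta(\bW(\beta,L))$.

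By the Varshamov--Gilbert lemma there exists $\Omega\subseteq\{0,1\}^{mN^s}$ with $|\Omega|\ge 2^{mN^s/8}$ and pairwise Hamming distance at least $mN^s/8$. Orthonormality then gives, for distinct $\omega,\omega'\in\Omega$,
$$
\|f_\omega-f_{\omega'}\|_2^2=\alpha^2\,\rho_H(\omega,\omega')\ge \tfrac{1}{8}\alpha^2 mN^s \asymp m\,L^{s/(2\beta+s)}\eps^{4\beta/(2\beta+s)},
$$
which is exactly the target separation. On the other hand, the Gaussian white noise model gives $\KL(\Pbf{}_{f_\omega},\Pbf{}_{f_{\omega'}})=\|f_\omega-f_{\omega'}\|_2^2/(2\eps^2)\le \alpha^2 mN^s/(2\eps^2)\asymp c_1 m\,(L/\eps^2)^{s/(2\beta+s)}$, whereas $\log|\Omega|\ge(\log 2)\,mN^s/8\asymp m\,(L/\eps^2)^{s/(2\beta+s)}$, so by choosing $c_0,c_1$ small enough one ensures $\KL\le \frac{1}{16}\log|\Omega|$ uniformly over distinct pairs. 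Fano's lemma then delivers the claimed lower bound.

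The conceptual content is entirely standard; the only place that asks for attention is the bookkeeping that makes the $m$ appear as a genuine multiplicative factor. The key observation enabling it is that, because the $V_\ell$ are disjoint, the $mN^s$ perturbation bits are effectively independent: Varshamov--Gilbert is applied to the whole binary cube of dimension $mN^s$ (not separately to each block), and both the separation and the log-packing-number scale linearly with $m$, so Fano's ratio $\KL/\log|\Omega|$ remains bounded by a constant independent of $m$. Edge cases where $L/\eps^2$ is of order $1$ (so $N$ degenerates) can be handled either by absorbing them into the constant $C$ or by noting that in that regime the target rate is itself $O(m\eps^2)$, which is a trivial lower bound.
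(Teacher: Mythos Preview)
Your argument is correct and essentially identical to the paper's: both construct a Varshamov--Gilbert hypercube on trigonometric coefficients at resolution $N\asymp(L/\eps^2)^{1/(2\beta+s)}$ and invoke the Fano/Tsybakov reduction (the paper cites Theorem~2.7 of Tsybakov, which is the same tool). The only difference is cosmetic: the paper first reduces to $m=1$ and $L=1$ by rescaling and renumbering, whereas you carry the $m$ disjoint blocks through explicitly by applying Varshamov--Gilbert on the full $\{0,1\}^{mN^s}$ cube; the two routes are equivalent and yield the same bound.

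One small slip worth flagging: your justification for taking all $|V_\ell|=s$ (``smaller atoms can only shrink the hypothesis class'') points in the wrong direction for a \emph{lower} bound, since a smaller class makes the bound harder, not easier. In fact the stated inequality cannot hold for $\eeta$ with $|V_\ell|<s$, so the proposition is mildly imprecise as written; the paper's proof silently makes the same $|V_\ell|=s$ assumption (via ``after a renumbering, $V=\{1,\dots,s\}$''), and this is the only case needed downstream.
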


\begin{proof}
Without loss of generality assume that $m = 1$. We will also assume that $L=1$ (this is without loss of generality as well,
since we can replace $\eps$ by $\eps/\sqrt{L}$ and by our assumption this quantity is less than 1). After a renumbering if needed,
we can assume that $\eeta$ is such that $\eeta_V=1$ for $V=\{1,\ldots,s\}$ and  $\eeta_V=0$ for $V\ne\{1,\ldots,s\}$.

Let $t$ be an integer not smaller than $4$. Then, the set $I$ of all multi-indices $\bk\in\ZZ^{s}$ satisfying
$|\bk|_\infty\le t$ is of cardinality $|I|\ge 9$. For any $\oomega=(\omega_k, k\in I) \in\{0,1\}^I$, we set
$f_{\oomega}(\bx)=\gamma\sum_{\bk\in I} \omega_\bk \varphi_{\bk}(x_1,\ldots,x_s)$,
where $\varphi_{\bk}(x_1,\ldots,x_s)=\prod_{j=1}^s \varphi_{k_j}(x_j)$, $\bk=(k_1,\dots,k_s)$,  is an element of the tensor-product trigonometric basis and
$\gamma>0$ is a parameter to be chosen later. In view of the orthonormality of the basis functions $\varphi_\bk$, we have
\begin{equation}\label{eq:7}
\|f_\oomega\|_2^2=\gamma^2|\oomega|_1, \quad \forall \ \oomega\in \{0,1\}^I.
\end{equation}
Therefore, we have $\sum_{\bk}|\bk|_\infty^{2\beta}\theta_\bk[f_\oomega]^2\le t^{2\beta} \|f_\oomega\|_2^2\le t^{2\beta}\gamma^2(2t+1)^s\le \gamma^2(2t+1)^{2\beta+s}$. Thus,
the condition $\gamma^2(2t+1)^{2\beta+s}\le 1$ ensures that all the functions $f_\oomega$ belong to $W(\beta,1)$.

Furthermore, for two vectors $\oomega, \oomega'\in \{0,1\}^I$ we have
$\|f_\oomega-f_{\oomega'}\|_2^2=\gamma^2|\oomega-\oomega'|_1$.
Note that the entries of the vectors $\oomega,\oomega'$ are either 0 or 1, therefore the $\ell_1$ distance
between these vectors coincides with the Hamming distance. According to the Varshamov-Gilbert lemma \cite[Lemma 2.9]{Tsybakov09}, there exists a set $\Omega\subset\{0,1\}^I$ of cardinality at least $2^{|I|/8}$
such that it contains the zero element and the pairwise distances
$|\oomega-\oomega'|_1$ are at least $|I|/8$ for any pair $\oomega,\oomega'\in \Omega$.

We can now apply Theorem~2.7 from \cite{Tsybakov09} that asserts that if, for some $\tau>0$, we have $\min_{\oomega,\oomega'\in\Omega} \|f_\oomega-f_{\oomega'}\|_2\ge 2\tau>0$, and
\begin{equation}\label{eq:9}
\frac1{|\Omega|}\sum_{\oomega\in\Omega}\nolimits \KL(\Pb_{f_\oomega},\PbO)\le \frac{\log |\Omega|}{16},
\end{equation}
where  $\KL(\cdot,\cdot)$ denotes the Kullback-Leibler divergence, then $\inf_{\widehat f}\max_{\oomega\in \Omega}\Ex_{f_\oomega}[\|\widehat f-f_{\oomega}\|_2^2]\ge c'\tau^2$ for some absolute constant $c'>0$. In our case, we set $\tau=\gamma\sqrt{ |I|/32}$.
Combining (\ref{eq:7}) and the fact that the Kullback-Leibler divergence between the Gaussian measures $\Pb_{f}$ and $\Pb_g$ is given by
$\frac12\eps^{-2}\|f-g\|_2^2$, we obtain
$\frac1{|\Omega|}\sum_{\oomega\in\Omega} \KL(\Pb_{f_\oomega},\PbO)\le\frac{1}{2}{\eps^{-2}\gamma^2 |I|}\,$.
If $\gamma^2\le (\log 2)\eps^2/64$, then (\ref{eq:9}) is satisfied and $\tau^2 =\gamma^2(2t+1)^s/32$ is a lower bound on the rate of convergence of
the minimax risk.

To finish the proof, it suffices to choose $t\in \NN$ and $\gamma>0$ satisfying the following three conditions: $t\ge 4$, $\gamma^2\le (2t+1)^{-2\beta-s}$  and
$\gamma^2\le \eps^2\log(2)/64$. For the choice $\gamma^{-2}=(2t+1)^{2\beta+s}+\eps^{-2}64/\log(2)$ and$t=[4\eps^{-2/(2\beta+s)}]$ all these conditions are
satisfied and $\tau^2 \ge c_1\eps^{4\beta/(2\beta+s)}$ for
some absolute positive constant $c_1$.
\end{proof}

\subsection{Lower bound for unknown structure $\eeta$}
\begin{proposition}
Let the assumptions of Proposition \ref{prop1} be satisfied. Then there exists an absolute constant $C'>0$ such that
$$
\inf_{\widehat f}\sup_{f\in \mcF_{s,m}(\bW(\beta,L))}\Ex_f[\|\widehat f-f\|_2^2]\ge C'\min\bigg\{L,\,ms\eps^2\log\bigg(\frac{8\,d}{sm^{1/s}}\bigg)\bigg\}\,.
$$
\end{proposition}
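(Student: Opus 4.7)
The plan is to reduce to a parametric sparse subproblem and apply Theorem~2.7 of \citet{Tsybakov09}, mimicking the argument of Proposition~\ref{prop1} but exploiting the combinatorial size of $\tilde\mcB_{s,m}^d$ instead of the local smoothness of the atoms. For each $V\in\mcV_s^d$ of size exactly $s$, let $\mathbf{e}_V\in\{0,1\}^d$ denote the indicator of $V$ and set $\varphi_{\mathbf{e}_V}(\bx)=\prod_{\ell\in V}\varphi_1(x_\ell)$; these are distinct elements of the tensor-product trigonometric basis and hence pairwise orthonormal. For a scale $\gamma>0$ and each $\eeta\in\tilde\mcB_{s,m}^d$ in which every $V\in\supp(\eeta)$ has $|V|=s$, I define
\[
f_{\eeta}(\bx)=\gamma\sum_{V\in\supp(\eeta)}\varphi_{\mathbf{e}_V}(\bx).
\]
Since $|\mathbf{e}_V|_\infty=1$, each atom $\gamma\varphi_{\mathbf{e}_V}$ lies in $W_V(\beta,L)$ as soon as $\gamma^2\le L$, so $f_\eeta\in\mcF_{s,m}(\bW(\beta,L))$. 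Orthonormality gives
\[
\|f_\eeta-f_{\eeta'}\|_2^2=\gamma^2\bigl|\supp(\eeta)\triangle\supp(\eeta')\bigr|,\qquad \|f_\eeta\|_2^2=\gamma^2 m,\qquad \KL(\Pb_{f_\eeta},\PbO)=\tfrac{\gamma^2 m}{2\eps^2}.
\]

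\textbf{Gilbert--Varshamov packing.} A greedy argument in the metric space of $m$-element collections of pairwise disjoint $s$-subsets of $\{1,\ldots,d\}$ with the symmetric-difference distance yields a set $\mcP\subset\tilde\mcB_{s,m}^d$ satisfying (i) $|\supp(\eeta)\triangle\supp(\eeta')|\ge m/2$ for all distinct $\eeta,\eeta'\in\mcP$ and (ii) $\log|\mcP|\ge c_0\,ms\log(8d/(sm^{1/s}))$ for an absolute constant $c_0>0$. The size bound combines the count $|\tilde\mcB_{s,m}^d|\ge\binom{d}{s}\binom{d-s}{s}\cdots\binom{d-(m-1)s}{s}/m!$---whose logarithm is at least $ms\log(d/(sm^{1/s}))+O(ms)$ by Stirling and the hypothesis $d\ge sm$---with an elementary bound on the size of a symmetric-difference ball of radius $m/2$.

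\textbf{Applying Theorem~2.7.} Setting $\tau=\gamma\sqrt{m/8}$, property (i) yields $\|f_\eeta-f_{\eeta'}\|_2\ge 2\tau$ for every distinct pair in $\mcP$. Choosing
\[
\gamma^2=\min\!\left\{L,\;\frac{c_0}{16}\eps^2 s\log\!\left(\frac{8d}{sm^{1/s}}\right)\right\}
\]
ensures simultaneously the admissibility condition $\gamma^2\le L$ and the Kullback--Leibler bound
\[
\frac{1}{|\mcP|}\sum_{\eeta\in\mcP}\KL(\Pb_{f_\eeta},\PbO)=\frac{\gamma^2 m}{2\eps^2}\le\frac{\log|\mcP|}{16}.
\]
Theorem~2.7 of \citet{Tsybakov09}, together with Markov's inequality, then yields
\[
\inf_{\widehat f}\sup_{f\in\mcF_{s,m}(\bW(\beta,L))}\Ex_f[\|\widehat f-f\|_2^2]\ge c_1\tau^2=\frac{c_1\gamma^2 m}{8}\ge C'\min\!\left\{mL,\;ms\eps^2\log\!\left(\frac{8d}{sm^{1/s}}\right)\right\},
\]
and the claimed inequality follows since $\min\{mL,R\}\ge\min\{L,R\}$ for every $R\ge 0$.

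\textbf{Main obstacle.} The genuinely delicate ingredient is the packing lemma: one must control the size of a symmetric-difference ball inside $\tilde\mcB_{s,m}^d$ tightly enough that Gilbert--Varshamov pruning preserves a constant fraction of the logarithm of the ambient cardinality. The $m^{1/s}$ factor in the denominator inside the logarithm originates from the $m!$ that appears when counting unordered disjoint collections of $s$-subsets; correctly tracking it in both the ambient count and the ball estimate is the main combinatorial nuisance, whereas the rest of the argument is a routine Fano-type computation.
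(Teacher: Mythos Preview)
Your overall architecture---build a well-separated family in $\tilde\mcB_{s,m}^d$, attach one basis function to each block, and invoke Theorem~2.7 of \cite{Tsybakov09}---matches the paper's. The gap is in the packing claim (ii): it is false as stated. Take $s=1$ and $d=m$, which is admissible since Proposition~\ref{prop1} only requires $d\ge sm$. Then the collection of $m$ pairwise disjoint singletons in $\{1,\dots,m\}$ is unique, so your hypothesis family is a singleton, $\log|\mcP|=0$, yet your bound asserts $\log|\mcP|\ge c_0\,m\log 8>0$. More generally, whenever $d/(sm^{1/s})$ is bounded the ambient set $\tilde\mcB_{s,m}^d$ is itself too small for any Gilbert--Varshamov pruning to produce $\exp(c\,ms)$ points, and your Fano budget collapses. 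The $O(ms)$ term you mention in the ambient count is not lower order in this regime; it is negative and dominates.

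The paper repairs exactly this by attaching to each packing element $\pi^{(k)}$ an additional sign pattern $\oomega\in\{-1,1\}^{ms}$: instead of the single function $\gamma\sum_{V}\varphi_{\mathbf{e}_V}$ it uses $\tau m^{-1/2}\sum_{V\in\pi^{(k)}}\varphi_{\oomega,V}$ with $\varphi_{\oomega,V}=\prod_{j\in V}\varphi_{\omega_j}$. This multiplies the hypothesis count by $2^{ms}$, contributing an unconditional $ms\log 2$ to the log-cardinality. The paper then handles $d<4sm^{1/s}$ directly via $ms\log 2\ge \tfrac{ms}{5}\log\!\big(\tfrac{8d}{sm^{1/s}}\big)$, and for $d\ge 4sm^{1/s}$ invokes the packing bound of Lemma~\ref{lem:3} (which is the precise version of the lemma you sketch). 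Your decision to fix a single multi-index $\mathbf{e}_V$ per block discards precisely the degree of freedom needed in the first case; once you restore the $\oomega$'s, the rest of your argument coincides with the paper's.
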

\begin{proof}
We use again Theorem~2.7 in \cite{Tsybakov09} but with a choice of the finite subset of $\mcF_{s,m}(\bW(\beta,L))$ different from that of Proposition~\ref{prop1}.
First, we introduce some additional notation.
For every triplet $(m,s,d)\in\NN_*^3$ satisfying $ms\le d$, let $\mcP_{s,m}^d$ be the set of collections
$\pi=\{V_1,\ldots,V_m\}$ such that each $V_\ell\subseteq\{ 1,\dots,d\}$ has exactly $s$ elements and $V_\ell$'s are pairwise disjoint.
We consider $\mcP_{s,m}^d$ as a metric space with the distance $\rho(\pi,\pi')= \frac1m\sum_{\ell=1}^m \1(V_\ell\not\in\{V'_1,\ldots,V'_m\})= \frac{|\pi \Delta \pi'|}{2m}\,$,  where $\pi'=\{V_1',\ldots,V_m'\}\in\mcP_{s,m}^d$. It is easy to see that $\rho(\cdot,\cdot)$ is a distance bounded by $1$.

For any $\vartheta\in(0,1)$, let $\mcN^d_{s,m}(\vartheta)$ denote the logarithm of the packing number, \textit{i.e.}, the logarithm of the
largest integer $K$ such that there are $K$ elements $\pi^{(1)},\ldots,\pi^{(K)}$ of $\mcP_{s,m}^d$ satisfying
$\rho(\pi^{(k)},\pi^{(k')})\ge \vartheta$. To each $\pi^{(k)}$ we associate a family of
functions ${\mathcal U}=\{f_{k,\oomega}:\oomega\in\{-1,1\}^{ms}, \,k=1,\dots,K\}$ defined by
$$
f_{k,\oomega}(\bx)=\frac{\tau}{\sqrt{m}} \sum_{V\in\pi^{(k)}} \varphi_{\oomega,V}(\bx_V),
$$
where $\tau=(1/4)\min\big(\eps \sqrt{ms\log 2+\log K}, \sqrt{L}\big)$ and $\varphi_{\oomega,V}(\bx_V)=\prod_{j\in V} \varphi_{\omega_j}(x_j)$.
Using that $\{\varphi_{\bj}\}$ is the tensor-product trigonometric basis it is easy to see that each $f_{k,\oomega}$ belongs to $\mcF_{s,m}(\bW(\beta,L))$.
Next, $|{\mathcal U}|=2^{ms}K$ and, for any $f_{k,\oomega}\in{\mathcal U}$, the Kullback-Leibler divergence
between $\Pb_{f_{k,\oomega}}$ and $\PbO$ is equal to $\KL(\Pb_{f_{k,\oomega}},\PbO)=\frac12\eps^{-2}\|f_{k,\oomega}\|_2^2=\frac{\eps^{-2}\tau^2}{2} \le \frac{\log |{\mathcal U}|}{16}$.
Furthermore, the functions  $f_{k,\oomega}$ are not too close to each other. Indeed, since $\{\varphi_{\bj}\}$ is the tensor-product trigonometric basis we get that, for all $f_{k,\oomega},f_{k',\oomega'}\in{\mathcal U}$,
\begin{align*}
\|f_{k,\oomega}-f_{k',\oomega'}\|_2^2
        &=\tau^2m^{-1}\Big(2m-\sum_{V\in \pi^{(k)}}\sum_{V'\in \pi^{(k')}} \int_{[0,1]^d}\varphi_{\oomega,V}(\bx_V)\varphi_{\oomega',V'}(\bx_{V'})\,d\bx\Big)\\
        &=\tau^2\Big(2-\frac1m\sum_{V\in \pi^{(k)}}\sum_{V'\in \pi^{(k')}} \1(V=V')\Big)
        =2\tau^2\rho(\pi^{(k)},\pi^{(k')})\ge 2\vartheta \tau^2.
\end{align*}
These remarks and Theorem~2.7 in \cite{Tsybakov09} imply that
\begin{align}\label{inf-1}
\inf_{\widehat f}\sup_{f\in {\mathcal U}}\Ex_f[\|\widehat f-f\|_2^2]
    &\ge c_3 \vartheta \tau^2 = \frac{c_3\vartheta}{16} \min\Big\{L,\eps^2(ms\log 2+\log K)\Big\}
\end{align}
for some absolute constant $c_3>0$. Assume first that $d<4sm^{1/s}$. Then $ms\log 2\ge  \frac{ms}5\log\big(\frac{8d}{sm^{1/s}}\big)$ and the result of the proposition is straightforward. If $d\ge 4sm^{1/s}$ we fix $\vartheta=1/8$ and use the following lemma (cf.\ the Appendix for a proof)
to bound $\log K=\mcN_{s,m}^d(\vartheta)$ from below.

\begin{lemma}\label{lem:3}
For any $\vartheta\in (0,1/8]$ we have
$
\mcN^d_{s,m}(\vartheta)\ge -m\log\big(\frac{8e^{7/8}s^{1/2}}{7}\big)+\frac{ms}{3}\log\big(\frac{d}{sm^{1/s}}\big) .
$
\end{lemma}

This yields
\begin{align}\label{inf-2}
ms\log 2+\mcN_{s,m}^d(\vartheta)
    &\ge    \frac{ms}3\log\Big(\frac{8d}{sm^{1/s}}\Big)-m\log\big({(8/7)e^{7/8}s^{1/2}}\big).
\end{align}
It is easy to check that
$
m\log\left({(8/7)e^{7/8}s^{1/2}}\right)\le 1.01ms,
$
while for $d\ge 4sm^{1/s}$ we have
$
\frac13\log\Big(\frac{8d}{sm^{1/s}}\Big)\ge 1.15.
$
Combining these inequalities with (\ref{inf-1}) and  (\ref{inf-2}) we get the result.
\end{proof}

\section{Discussion and outlook}

We presented a new framework, called the compound functional model, for performing various statistical
tasks such as prediction, estimation and testing in the context of high dimension. We studied the
problem of estimation in this model from a minimax point of view when the data are generated by
a Gaussian process.  We established upper and lower bounds on the minimax risk that match
up to a multiplicative constant. These bounds are nonasymptotic and are attained adaptively with
respect to the macroscopic and microscopic sparsity parameters $m$ and $s$, as well as to the
complexity of the atoms of the model.  In particular, we improve in several aspects upon the existing
results for the sparse additive model, which is a special case of the compound functional model (only
for this case the rates were previously explicitly treated in the literature):
\vspace{-4mm}
\begin{itemize}
\item The exact expression for the optimal rate that we obtain reveals that the existing methods for the sparse additive model based on penalized least squares techniques have logarithmically suboptimal rates.
\item On the difference from most of the previous work, we do not require restricted isometry type assumptions on the subspaces of the additive model; we need only a much weaker one-sided condition (\ref{3a}). Possible extensions to general compound model based on the existing literature would again suffer from the rate suboptimality and require such type of extra conditions.
\item When specialized to the sparse additive model,
our results are adaptive with respect to the smoothness of the atoms, while all the previous
work about the rates considered the smoothness (or the reproducing kernel) as given in advance.
\end{itemize}
For the general compound model, the main difficulty is in the proof of the lower bounds of the order $ms \varepsilon^2\log( d/(s m^{1/s}))$ that are not covered by the standard tools such as the Varshamov-Gilbert lemma or $k$-selection lemma. Therefore, we developed here new tools for the lower bounds that can be of independent interest.

An important issue that remained out of scope of the present work but is undeniably worth studying is
the possibility of achieving the minimax rates by computationally tractable procedures. Clearly, the
complexity of exact computation of the procedure described in Section~\ref{sec:upper} scales as $\eps^{-2m}2^{M_{d,s}}$,
which is prohibitively large for typical values of $d$, $s$ and $m$. It is possible, however, to approximate
our estimator by using a Markov Chain Monte-Carlo (MCMC) algorithm similar to that of
\cite{RigTsy11,RigTsy12}.
The idea is to begin with an initial state $(\bt_0,\eeta_0)$ and to randomly generate
a new candidate $(\bu,\zzeta)$ according to the distribution $q(\cdot|\bt_0,\eeta_0)$, where $q(\cdot|\cdot)$ is a
given Markov kernel. Then, a Bernoulli random variable $\xi$  with probability of the output 1 equal to
$\alpha= 1\wedge\frac{\widehat\pi(\bu,\zzeta)}{\widehat\pi(\bt,\eeta)}\frac{q(\bt,\eeta|\bu,\zzeta)}{q(\bu,\zzeta|\bt,\eeta)}$
is drawn and a new state $(\bt_1,\eeta_1)=\xi\cdot(\bu,\zzeta)+(1-\xi)\cdot(\bt_0,\eeta_0)$ is defined.
This procedure is repeated $K$ times producing thus a realization $\{(\bt_k,\eeta_k); k=0,\ldots,K\}$ of a
reversible Markov chain. Then, the average value $\frac1K\sum_{k=1}^K \widehat\ttheta_{\bt_k,\eeta_k}$ provides an
approximation to the estimator $\widehat f_\eps$ defined in Section~\ref{sec:upper}.

If $s$ and $m$ are small and $q(\cdot|\bt,\eeta')$ is such that all the mass of this distribution
is concentrated on the nearest neighbors of the $\eeta'$ in the hypercube of $2^{M_{d,s}}$ all possible $\eeta$'s,
then the computations can be performed in a polynomial time. For example, if $s=2$, \textit{i.e.}, if we allow
only pairwise interactions, each step of the algorithm requires $\sim\eps^{-2m}d^2$ computations, where the
factor  $\eps^{-2m}$ can be reduced to a power of $\log(\eps^{-2})$ by a suitable modification of the estimator.
How fast such MCMC algorithms converge to our estimator and what is the
most appealing choice for the Markov kernel $q(\cdot|\cdot)$ are challenging open
questions for future research.

\appendix
\section*{Appendix}

\section{Proof of Lemma~\ref{lem:1}}
Let $\eeta\in\tilde\mcB$  be such that $f\in \mcF_\eeta(\bW(\beta,L))$ and  $\supp(\eeta)=\{V_1,\ldots,V_m\}$ where $|V_\ell|\le s$.
Then there exist a constant $\bar f$ and $m$ functions $f_1,\ldots,f_m$  such that $f_\ell\in W_{V_\ell}(\beta,L)$, $\ell=1,\dots,m$,
and $f=\bar f+f_1+\ldots+f_m$. Set $\theta_{\bj,\ell}=\theta_\bj[f_\ell]$, $(\bj,\ell)\in\ZZ^d\times\{1,\dots,m\}$. Using the notation  $t_\ell=t_{V_\ell}$
and $$\mcJ=\{\bj\in\ZZ^d: \exists \,\ell\in\{1,\dots,m\} \  \text{such that} \ \supp(\bj)\subseteq V_\ell\ \text{and}\ |\bj|_\infty\le t_{\ell} \}$$
we get
\begin{align*}
\widehat f_{\bt,\eeta}-f
    &= (Y_{\bf 0}-\bar f)\varphi_{\bf 0}+\sum_{\bj\in\mcJ\setminus{\bf 0}} \widehat\theta_{\bt,\eeta,\bj}\varphi_\bj-\sum_{\ell=1}^m \sum_{\bj\in\ZZ^d\setminus{\bf 0}} \theta_{\bj,\ell}\varphi_\bj\1_{\{\supp(\bj)\subseteq V_\ell;|\bj|_\infty\le t_{\ell}\}}  \\
    &\quad
            -\sum_{\ell=1}^m\sum_{\bj\in\ZZ^d} \theta_{\bj,\ell}\varphi_\bj\1_{\{\supp(\bj)\subseteq V_\ell;|\bj|_\infty> t_{\ell}\}}\\
    &= \sum_{\bj\in\mcJ} \eps \xi_\bj\varphi_\bj
        -
        \sum_{\ell=1}^m\sum_{\bj\in\ZZ^d} \theta_{\bj,\ell}\varphi_\bj\1_{\{\supp(\bj)\subseteq V_\ell;|\bj|_\infty> t_{\ell}\}},
\end{align*}
where $(\xi_\bj)_{\bj\in\ZZ^d}$ are i.i.d.\ Gaussian random variables with zero mean and variance one. In view of the bias-variance decomposition and
(\ref{3a}), we bound the risk of $\widehat f_{\bt,\eeta}$ as follows:
\begin{align}
\Ex_f[\|\widehat f_{\bt,\eeta}-f\|_2^2]
		&\le \sum_{\bj\in\mcJ} \eps^2  +C_* \sum_{\ell=1}^m\sum_{\bj\in\ZZ^d} \theta_{\bj,\ell}^2\1_{\{\supp(\bj)\subseteq V_\ell;|\bj|_\infty> t_{\ell}\}}
\nonumber\\
		&\le \sum_{\ell=1}^m\sum_{\bj\in\ZZ^d} \eps^2 \1_{\{\supp(\bj)\subseteq V_\ell;|\bj|_\infty\le t_{\ell}\}}
            +C_*\sum_{\ell=1}^m\sum_{\bj\in\ZZ^d} \theta_{\bj,\ell}^2\1_{\{\supp(\bj)\subseteq V_\ell;|\bj|_\infty> t_{\ell}\}}\nonumber\\
		&\le m \max_{\ell=1,\dots,m} \sum_{\bj:\supp(\bj)\subseteq V_\ell} \Big(\eps^2 \1_{\{|\bj|_\infty\le t_{\ell}\}}
            +C_*\theta_{\bj,\ell}^2\1_{\{|\bj|_\infty> t_{\ell}\}}\Big)\label{eq:10}.
\end{align}
In the right-hand side of (\ref{eq:10}), the first summand is the variance term, while the second summand is the (squared) bias term of the risk. We bound these two terms separately.
For the bias contribution to the risk, we find:
\begin{align}
\sum_{\bj:\supp(\bj)\subseteq V_\ell} \theta_{\bj,\ell}^2\1_{\{|\bj|_\infty> t_{\ell}\}}	
	&\le (t_\ell+1)^{-2\beta}\sum_{\bj:\supp(\bj)\subseteq V_\ell} {|\bj|}_\infty^{2\beta}\theta_{\bj,\ell}^2\nonumber\\
	&\le L(t_\ell+1)^{-2\beta}\nonumber\\
	&\le L\bigg(\eps^{4\beta} \vee (L/(3^{|V_\ell|}\eps^2))^{-2\beta/(2\beta+|V_\ell|)}\bigg)\nonumber\\
		&\le 3^{2\beta\wedge s}\big(L\eps^{4\beta} \vee L^{s/(2\beta+s)}\eps^{4\beta/(2\beta+s)}\big).\label{eq:3}
\end{align}
If $t_\ell\ge 1$, then the variance contribution to the risk is bounded as follows:
\begin{align}
\sum_{\bj:\supp(\bj)\subseteq V_\ell}\eps^2\1_{\{{|\bj|}_\infty\le t_\ell\}}
		&=\eps^2(2t_\ell+1)^{|V_\ell|}\le \eps^2(3t_\ell)^{|V_\ell|}
		\le 3^{2\beta\wedge s}L^{|V_\ell|/(2\beta+|V_\ell|)}\eps^{4\beta/(2\beta+|V_\ell|)},\label{eq:4}
\end{align}
where we have used that $t_\ell\le (L/3^{|V_\ell|}\eps^2)^{1/(2\beta+|V_\ell|)}$ and $|V_\ell|\le s$.
Finally, note that condition $\log(\eps^{-2})\ge (2\beta)^{-1}\log(L)$ implies that $L\eps^{4\beta} < L^{s/(2\beta+s)}\eps^{4\beta/(2\beta+s)}$ in (\ref{eq:3}).
Thus, inequality (\ref{eq:10}) together with (\ref{eq:3}) and (\ref{eq:4}) yields the lemma in the case $t_\ell\ge 1$. If $t_\ell<1$, \textit{i.e.}, $t_\ell=0$,
the same arguments imply that the bias is bounded by $L$ and the variance is bounded by $\eps^2$. Since $L\ge \eps^2$, the sum at the right-hand side of (\ref{eq:10}) is
bounded by $(1+C_*)L$. One can check that $t_\ell$ equals $0$ only if $L<3^s\eps^{-2}$, and in this case $L= L^{s/(2\beta+s)}L^{2\beta/(2\beta+s)}
\le L^{s/(2\beta+s)}\eps^{4\beta/(2\beta+s)} 3^{2\beta s/(2\beta+s)}\le 3^{2\beta\wedge s}L^{s/(2\beta+s)}\eps^{-4\beta/(2\beta+s)} $. This completes the proof.

\section{Proof of Lemma~\ref{lem:3}}
Prior to presenting a proof of Lemma~\ref{lem:3}, we need an additional result.
\begin{lemma}\label{lem:2}
For a triplet $(m,s,d)\in\NN_*^3$ satisfying $ms\le d$, let $\mcP_{s,m}^d$ be the set of all collections
$\pi=\{A_1,\ldots,A_m\}$ with $A_i\subseteq\{1,\dots,d\}$ such that $|A_i|=s$ for all $i$ and $A_i\cap A_k=\emptyset$
for $i\ne k$. Then
$$
s^{-(m-1)/2}\bigg(\frac{d}{sm^{1/s}}\bigg)^{ms}\le |\mcP_{s,m}^d|\le \bigg(\frac{e^2d}{sm^{1/s}}\bigg)^{ms}.
$$
\end{lemma}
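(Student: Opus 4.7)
The plan is to derive a closed-form expression for $|\mcP_{s,m}^d|$ by a direct counting argument and then apply Stirling-type factorial bounds to extract the two stated inequalities. First, the number of \emph{ordered} sequences $(A_1,\ldots,A_m)$ of pairwise disjoint $s$-subsets of $\{1,\ldots,d\}$ is $\prod_{i=0}^{m-1}\binom{d-is}{s}=d!/((s!)^m(d-ms)!)$, so dividing by the $m!$ orderings of a given collection yields
\[
|\mcP_{s,m}^d|=\frac{d!}{(s!)^m(d-ms)!\,m!}=\binom{d}{ms}\cdot\frac{(ms)!}{(s!)^m\,m!},
\]
where the second identity follows by first choosing the union $A_1\cup\ldots\cup A_m\subseteq\{1,\dots,d\}$ of size $ms$ and then partitioning it into $m$ unordered blocks of size $s$.

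For the upper bound, I would start from the product form $|\mcP_{s,m}^d|=(m!)^{-1}\prod_{i=0}^{m-1}\binom{d-is}{s}$, bound each binomial via $\binom{d-is}{s}\le\binom{d}{s}\le(ed/s)^s$, and use $m!\ge(m/e)^m$ to obtain $|\mcP_{s,m}^d|\le(ed/s)^{ms}(e/m)^m$. Since $s\ge1$ gives $e^m\le e^{ms}$, the right-hand side is at most $(e^2d)^{ms}/(s^{ms}m^m)=(e^2d/(sm^{1/s}))^{ms}$, which is exactly the claimed upper bound.

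For the lower bound, I would exploit the second factorization. The standard inequality $(d-i)/(ms-i)\ge d/(ms)$ for $0\le i\le ms-1$, valid since $d\ge ms$, gives $\binom{d}{ms}\ge(d/(ms))^{ms}=d^{ms}/(m^{ms}s^{ms})$. For the partition-counting factor $(ms)!/((s!)^m m!)$, I would apply Stirling's bounds $\sqrt{2\pi n}(n/e)^n\le n!\le\sqrt{2\pi n}(n/e)^n e^{1/(12n)}$: after the $(n/e)^n$ pieces collapse via $(ms/e)^{ms}/((s/e)^{ms}(m/e)^m)=m^{m(s-1)}e^m$, the polynomial prefactors combine to $s^{-(m-1)/2}$ times a constant close to~$1$. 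Multiplying the two bounds and simplifying $d^{ms}\cdot m^{m(s-1)}/(m^{ms}s^{ms})=d^{ms}/(m^m s^{ms})$ produces the target expression $s^{-(m-1)/2}(d/(sm^{1/s}))^{ms}$.

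The main obstacle is the delicate Stirling accounting in the lower bound. A naive use of the crude forms $n!\ge\sqrt{2\pi n}(n/e)^n$ and $n!\le e\sqrt{n}(n/e)^n$ produces a stray constant $\sqrt{2\pi}/e\approx 0.92$, slightly below the clean value $1$ stated in the lemma. To close this small gap I would deploy the refined upper bound $n!\le\sqrt{2\pi n}(n/e)^n e^{1/(12n)}$ for $(s!)^m$ and $m!$, and recover the missing slack from the strict improvement of $\binom{d}{ms}\ge(d/(ms))^{ms}$ coming from $(d-i)/(ms-i)>d/(ms)$ for $i\ge1$ (which contributes an extra factor of the order of $\sqrt{d/(d-ms)}$). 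Combining these two refinements absorbs the residual constant and delivers the stated bound.
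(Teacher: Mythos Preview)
Your approach coincides with the paper's: both write $|\mcP_{s,m}^d|=\binom{d}{ms}\,\frac{(ms)!}{(s!)^m m!}$, bound $\binom{d}{ms}\ge(d/(ms))^{ms}$, and then apply the refined Stirling inequalities $\sqrt{2\pi n}(n/e)^n\le n!\le\sqrt{2\pi n}(n/e)^n e^{1/(12n)}$ to the multinomial factor. Your upper-bound argument is correct and in fact slightly more direct than the paper's (which instead invokes $|\mcP_{s,m}^d|\le|\mcB_{s,m}^d|$ together with an earlier inequality).

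There is, however, a small gap in the lower-bound plan. Your ``fix~(a)'' is exactly what the paper does, and on its own it gives
\[
\frac{(ms)!}{(s!)^m m!}\;\ge\;\frac{m^{ms-m}}{s^{(m-1)/2}}\left[\frac{e^{\,1-1/(12m^2)-1/(12s)}}{\sqrt{2\pi}}\right]^m,
\]
and the bracket exceeds $1$ precisely when $m\ge2$ \emph{and} $s\ge2$ (at $m=s=2$ the exponent is $15/16>\log\sqrt{2\pi}\approx0.919$). When $s=1$ or $m=1$ the bracket is strictly below $1$, and your ``fix~(b)'' cannot rescue it: if $d=ms$ the bound $\binom{d}{ms}\ge(d/(ms))^{ms}$ is an equality, so there is no binomial slack whatsoever to harvest (and the claimed $\sqrt{d/(d-ms)}$ heuristic is not uniformly valid). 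The paper handles this by a one-line case split: if $s=1$ or $m=1$ then $(ms)!=(s!)^m m!$ exactly, so $|\mcP_{s,m}^d|=\binom{d}{ms}\ge(d/(ms))^{ms}$, which coincides with the claimed lower bound in those cases. Adding this observation completes your argument; fix~(b) is then unnecessary.
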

\begin{proof}
Using standard combinatorial arguments we find
$$
\mcP_{s,m}^d=\binom{d}{ms} \frac{(ms)!}{(s!)^m m!}\ge \bigg(\frac{d}{ms}\bigg)^{ms} \frac{(ms)!}{(s!)^m m!}.
$$
If either $s=1$ or $m=1$ then ${(ms)!}={(s!)^m m!}$ and the lower bound stated in the lemma
is obviously true. Assume now that $m\ge 2$ and $s\ge2$.
Recall that according to the Stirling formula, for every $n\in\NN$,
$\sqrt{2\pi n}(n/e)^{n}\le n!\le \sqrt{2\pi n}(n/e)^{n} e^{1/12n}$. Therefore,
\begin{align*}
\frac{(ms)!}{m! (s!)^m } &\geq \frac{\sqrt{2\pi ms}(ms/e)^{ms}}{\sqrt{2\pi m}(m/e)^{m} e^{\frac1{12m}+\frac{m}{12s}}(\sqrt{2\pi s})^m(s/e)^{ms}}\\
	&=\frac{m^{ms}}{m^m}\Big[e^{1-\frac1{12m^2}-\frac1{12s}}/\sqrt{2\pi}\Big]^m  s^{-(m-1)/2}\,.
\end{align*}
Since the expression in square brackets in the last display is greater than 1 we obtain the desired lower bound on $|\mcP_{s,m}^d|$. The upper bound follows from (\ref{eq:1}) and the fact that $|\mcP_{s,m}^d|\le |\mcB_{s,m}^d|$.
\end{proof}

\textbf{Proof of Lemma~\ref{lem:3}}. Consider first the case $m=1$. The set $\mcP^d_{s,1}$ is the collection of all subsets of $\{ 1,\dots,d\}$ having
exactly $s$ elements. The distance $\rho$ is then 0 if the sets coincide and 1 otherwise. Thus, we need to
bound from below the logarithm of $|\mcP^d_{s,1}|=\binom{d}{s}$. It is enough to use the inequality
$\log \binom{d}{s}\ge s\log(d/s)$.


Assume now that $m\ge 2$.
Since $\pi^{(1)},\ldots,\pi^{(K)}$ is a \textit{maximal} $\vartheta$-separated set of $\mcP_{s,m}^d$ we have that $\mcP_{s,m}^d$ is covered
by the union of $\rho$-balls $B(\pi^{(k)},\vartheta)$ of radius $\vartheta$ centered at $\pi^{(k)}$'s. Therefore,
$$
|\mcP_{s,m}^d|\le \sum_{k=1}^K |B(\pi^{(k)},\vartheta)|.
$$
It is clear that the cardinality of the ball  $|B(\pi^{(k)},\vartheta)|$ does not depend on $\pi^{(k)}$. This yields
$$
K\ge \frac{|\mcP_{s,m}^d|}{|B(\pi^{0},\vartheta)|}
$$
where $\pi^0=\{A^0_1,\ldots,A^0_m\}$ such that $A^0_i=\{ (i-1)s+1,\dots, is\}$.
We have already established a lower bound on $|\mcP_{s,m}^d|$ in Lemma~\ref{lem:2}. We now find an upper bound on the cardinality of the
ball $B(\pi^{0},\vartheta)$. Let $m_\vartheta$ be the smallest integer greater than or equal to $(1-\vartheta)m$.  Consider some $\pi=\{A_1,\ldots,A_m\}\in\mcP_{s,m}^d$. Note that $\pi\in B(\pi^{0},\vartheta)$ if and only if
$$
\sum_{i=1}^m \1(A_i\in\{A^0_1,\ldots,A^0_m\}) \ge m_\vartheta.
$$
This means that there are $m_\vartheta$ indexes $i_1,\ldots,i_{m_\vartheta}$ such that
the $m_\vartheta$ sets $A^0_{i_j}$ are in $\pi$ and the remaining $m-m_\vartheta$ elements of $\pi$ are chosen
as an arbitrary collection of $m-m_\vartheta$ disjoint subsets of $\{ 1,\dots,d\}\setminus \bigcup_{j=1}^{m_\vartheta} A^0_{i_j}$, each of which  is of cardinality  $s$. There are $\binom{m}{m_\vartheta}$ ways of choosing
$\{i_1,\ldots,i_{m_\vartheta}\}$ and once this choice is fixed, there are $|\mcP_{s,m-m_\vartheta}^{d-sm_\vartheta}|$
ways of choosing the remaining parts. Thus,  $|B(\pi^{0},\vartheta)|\le \binom{m}{m_\vartheta}|\mcP_{s,m-m_\vartheta}^{d-sm_\vartheta}|$. Using this inequality and Lemma~\ref{lem:2} we obtain
\begin{align*}
K   &\ge \frac{|\mcP_{s,m}^d|}{\binom{m}{m_\vartheta}|\mcP_{s,m-m_\vartheta}^{d-sm_\vartheta}|}
    \ge \frac{s^{-(m-1)/2}\Big(\frac{d}{sm^{1/s}}\Big)^{ms}}{\Big(\frac{em}{m_\vartheta}\Big)^{m_\vartheta}\Big(\frac{e^2(d-sm_\vartheta)}{sm_\vartheta^{1/s}}\Big)^{s(m-m_\vartheta)}} \\
    &\ge s^{-(m-1)/2}e^{2s(m_\theta-m)-m_\theta}\Big(\frac{d}{sm^{1/s}}\Big)^{sm_\vartheta} \Big(\frac{m_\vartheta}{m}\Big)^{m}
     \bigg(1+\frac{sm_\theta}{d-sm_\theta}\bigg)^{s(m-m_\theta)}.
\end{align*}
Since $\vartheta\le 1/8$ we have $
m_\vartheta\ge m\big(1-\vartheta\big)\ge 7m/8$ and after some algebra we deduce from the previous display that
\begin{align}\label{eq:append}
\log(K)
	&\ge -\frac{ms}{4}-m\log\bigg(\frac{8e^{7/8}s^{1/2}}{7}\bigg)+\frac{7ms}8\log\bigg(\frac{d}{sm^{1/s}}\bigg).
\end{align}
Assume first that $s\ge 3$. Since also $m\ge 2$ we have
$2^{1-1/3}\le m^{1-1/s}=\frac{ms}{sm^{1/s}}\le \frac{d}{sm^{1/s}}$. Hence
\begin{align*}
\log(K)
	&\ge -\frac{ms}{4\log(2^{2/3})}\log\bigg(\frac{d}{sm^{1/s}}\bigg)-m\log\bigg(\frac{8e^{7/8}s^{1/2}}{7}\bigg)+\frac{7ms}8\log\bigg(\frac{d}{sm^{1/s}}\bigg)
\end{align*}
and the result of the lemma follows from the inequality $7/8-1/\log (2^{8/3})\ge 1/3$. It remains to consider the case $s\in\{1,2\}$. If the right-hand side
of the inequality of the lemma is negative, then the result is trivial. If the right-hand side is positive, we have
$\log(8e^{7/8}/{7})\le \frac{2}{3}\log\big(\frac{d}{sm^{1/s}}\big)$ for $s\in\{1,2\}$.  Therefore, from (\ref{eq:append}) we obtain
\begin{align*}
\log(K)
	&\ge -\frac{ms}{6\log(8e^{7/8}/{7})}\log\bigg(\frac{d}{sm^{1/s}}\bigg)-m\log\bigg(\frac{8e^{7/8}s^{1/2}}{7}\bigg)+\frac{7ms}8\log\bigg(\frac{d}{sm^{1/s}}\bigg)
\end{align*}
and the result of the lemma follows from the inequality $7/8-(6\log(8e^{7/8}/{7}))^{-1}\ge 1/2$.

\section*{Acknowledgments}
The authors acknowledge the support of the French Agence Nationale de la Recherche (ANR) under the grant PARCIMONIE.

\bibliography{ref_all}

\begin{thebibliography}{29}
\providecommand{\natexlab}[1]{#1}
\providecommand{\url}[1]{\texttt{#1}}
\expandafter\ifx\csname urlstyle\endcsname\relax
  \providecommand{\doi}[1]{doi: #1}\else
  \providecommand{\doi}{doi: \begingroup \urlstyle{rm}\Url}\fi

\bibitem[Bach(2009)]{Bach09}
Francis Bach.
\newblock High-dimensional non-linear variable selection through hierarchical
  kernel learning.
\newblock Technical report, arXiv:0909.0844, 2009.

\bibitem[Bertin and Lecu{\'e}(2008)]{BertinLecue}
Karine Bertin and Guillaume Lecu{\'e}.
\newblock Selection of variables and dimension reduction in high-dimensional
  non-parametric regression.
\newblock \emph{Electron. J. Stat.}, 2:\penalty0 1224--1241, 2008.

\bibitem[Bickel et~al.(2009)Bickel, Ritov, and Tsybakov]{BRT1}
Peter~J. Bickel, Ya'acov Ritov, and Alexandre~B. Tsybakov.
\newblock Simultaneous analysis of lasso and {D}antzig selector.
\newblock \emph{Ann. Statist.}, 37\penalty0 (4):\penalty0 1705--1732, 2009.

\bibitem[Bickel et~al.(2010)Bickel, Ritov, and Tsybakov]{BRT2}
Peter~J. Bickel, Ya'acov Ritov, and Alexandre~B. Tsybakov.
\newblock Hierarchical selection of variables in sparse high-dimensional
  regression.
\newblock In \emph{Borrowing strength: theory powering applications---a
  {F}estschrift for {L}awrence {D}. {B}rown}, volume~6 of \emph{Inst. Math.
  Stat. Collect.}, pages 56--69. Inst. Math. Statist., Beachwood, OH, 2010.

\bibitem[Brown and Low(1996)]{BrownLow96}
Lawrence~D. Brown and Mark~G. Low.
\newblock Asymptotic equivalence of nonparametric regression and white noise.
\newblock \emph{Ann. Statist.}, 24\penalty0 (6):\penalty0 2384--2398, 1996.

\bibitem[Comminges and Dalalyan(2011)]{ComDal11}
La{\"e}titia Comminges and Arnak~S. Dalalyan.
\newblock Tight conditions for consistent variable selection in high
  dimensional nonparametric regression.
\newblock \emph{Journal of Machine Learning Research - Proceedings Track},
  19:\penalty0 187--206, 2011.

\bibitem[Comminges and Dalalyan(2012)]{CD11a}
La{\"e}titia Comminges and Arnak~S. Dalalyan.
\newblock Tight conditions for consistency of variable selection in the context
  of high dimensionality.
\newblock \emph{Ann. Statist.}, \penalty0 (in press), 2012.

\bibitem[Dai et~al.(2012)Dai, Rigollet, and Zhang]{Dai_et_al}
Dong Dai, Philippe Rigollet, and Tong Zhang.
\newblock {Deviation optimal learning using greedy $Q$-aggregation.}
\newblock \emph{Ann. Stat.}, 40\penalty0 (3):\penalty0 1878--1905, 2012.

\bibitem[Dalalyan and Rei{\ss}(2006)]{DalReiss06}
Arnak Dalalyan and Markus Rei{\ss}.
\newblock Asymptotic statistical equivalence for scalar ergodic diffusions.
\newblock \emph{Probab. Theory Related Fields}, 134\penalty0 (2):\penalty0
  248--282, 2006.

\bibitem[Dalalyan and Tsybakov(2008)]{DT08}
Arnak~S. Dalalyan and Alexandre~B. Tsybakov.
\newblock Aggregation by exponential weighting, sharp {PAC}-{B}ayesian bounds
  and sparsity.
\newblock \emph{Machine Learning}, 72\penalty0 (1-2):\penalty0 39--61, 2008.

\bibitem[Dalalyan and Tsybakov(2012)]{DT11}
Arnak~S. Dalalyan and Alexandre~B. Tsybakov.
\newblock Sparse regression learning by aggregation and {L}angevin
  {M}onte-{C}arlo.
\newblock \emph{J. Comput. System Sci.}, 78\penalty0 (5):\penalty0 1423--1443,
  2012.

\bibitem[Gayraud and Ingster(2012)]{Gayraud_Ingster11}
Ghislaine Gayraud and Yuri Ingster.
\newblock Detection of sparse variable functions.
\newblock \emph{Electron. J. Statist.}, 6:\penalty0 1409--1448, 2012.

\bibitem[Golubev et~al.(2010)Golubev, Nussbaum, and Zhou]{GNZ}
Georgi~K. Golubev, Michael Nussbaum, and Harrison~H. Zhou.
\newblock Asymptotic equivalence of spectral density estimation and {G}aussian
  white noise.
\newblock \emph{Ann. Statist.}, 38\penalty0 (1):\penalty0 181--214, 2010.

\bibitem[Ingster and Lepski(2003)]{Ingster_Lepski03}
Yu. Ingster and O.~Lepski.
\newblock Multichannel nonparametric signal detection.
\newblock \emph{Math. Methods Statist.}, 12\penalty0 (3):\penalty0 247--275,
  2003.

\bibitem[Kekatos and Giannakis(2011)]{Kekatos11}
Vassilis Kekatos and Georgios~B. Giannakis.
\newblock Sparse {V}olterra and polynomial regression models: Recoverability
  and estimation.
\newblock \emph{IEEE Transactions on Signal Processing}, 59\penalty0
  (12):\penalty0 5907--5920, 2011.

\bibitem[Koltchinskii and Yuan(2010)]{Koltchinskii_Yuan10}
Vladimir Koltchinskii and Ming Yuan.
\newblock Sparsity in multiple kernel learning.
\newblock \emph{Ann. Statist.}, 38\penalty0 (6):\penalty0 3660--3695, 2010.

\bibitem[Lecu{\'e} and Mendelson(2012)]{Lecue_Mend1}
Guillaume Lecu{\'e} and Shahar Mendelson.
\newblock {On the optimality of the aggregate with exponential weights for low
  temperatures}.
\newblock \emph{Bernoulli}, \penalty0 (to appear), 2012.

\bibitem[Leung and Barron(2006)]{Leung_Barron06}
Gilbert Leung and Andrew~R. Barron.
\newblock Information theory and mixing least-squares regressions.
\newblock \emph{IEEE Trans. Inform. Theory}, 52\penalty0 (8):\penalty0
  3396--3410, 2006.

\bibitem[Meier et~al.(2009)Meier, van~de Geer, and
  B{\"u}hlmann]{Meier_vdG_Buhlmann}
Lukas Meier, Sara van~de Geer, and Peter B{\"u}hlmann.
\newblock High-dimensional additive modeling.
\newblock \emph{Ann. Statist.}, 37\penalty0 (6B):\penalty0 3779--3821, 2009.

\bibitem[Nazer and Nowak(2010)]{Bobak_Novak}
Bobak Nazer and Robert Nowak.
\newblock Sparse interactions: Identifying high-dimensional multilinear systems
  via compressed sensing.
\newblock In \emph{Proc. of the Allerton Conf.}, Monticello, IL, 2010.

\bibitem[Raskutti et~al.(2012)Raskutti, Wainwright, and Yu]{Raskutti_et_al11}
Garvesh Raskutti, Martin~J. Wainwright, and Bin Yu.
\newblock Minimax-optimal rates for sparse additive models over kernel classes
  via convex programming.
\newblock \emph{J. Mach. Learn. Res.}, 13:\penalty0 389--427, 2012.

\bibitem[Rei{\ss}(2008)]{Reiss08}
Markus Rei{\ss}.
\newblock Asymptotic equivalence for nonparametric regression with multivariate
  and random design.
\newblock \emph{Ann. Statist.}, 36\penalty0 (4):\penalty0 1957--1982, 2008.

\bibitem[Rigollet and Tsybakov(2011)]{RigTsy11}
Philippe Rigollet and Alexandre~B. Tsybakov.
\newblock {E}xponential {S}creening and optimal rates of sparse estimation.
\newblock \emph{Ann. Statist.}, 39\penalty0 (2):\penalty0 731--771, 2011.

\bibitem[Rigollet and Tsybakov(2012)]{RigTsy12}
Philippe Rigollet and Alexandre~B. Tsybakov.
\newblock Sparse estimation by exponential weighting.
\newblock \emph{Statist. Sci.}, 27\penalty0 (4):\penalty0 558--575, 2012.

\bibitem[Rosasco et~al.(2009)Rosasco, Villa, Mosci, Santoro, and
  Verri]{mosci_rosasco}
Lorenzo Rosasco, Silvia Villa, Sofia Mosci, Matteo Santoro, and Alessandro
  Verri.
\newblock Nonparametric sparsity and regularization.
\newblock Technical report, arXiv:1208.2572v1, 2009.

\bibitem[Stone(1985)]{Stone85}
Charles~J. Stone.
\newblock Additive regression and other nonparametric models.
\newblock \emph{Ann. Statist.}, 13\penalty0 (2):\penalty0 689--705, 1985.

\bibitem[Suzuki(2012)]{Suzuki}
Taiji Suzuki.
\newblock {PAC}-{B}ayesian bound for gaussian process regression and multiple
  kernel additive model.
\newblock In \emph{COLT}, arXiv:1102.3616v1 [math.ST], 2012.

\bibitem[Tsybakov(2009)]{Tsybakov09}
Alexandre~B. Tsybakov.
\newblock \emph{Introduction to nonparametric estimation}.
\newblock Springer Series in Statistics. Springer, New York, 2009.

\bibitem[van~de Geer and B{\"u}hlmann(2011)]{Sara_Peter_book}
Sara van~de Geer and Peter B{\"u}hlmann.
\newblock \emph{Statistics for High-Dimensional Data}.
\newblock Springer Texts in Statistics. Springer-Verlag, New York, second
  edition, 2011.

\end{thebibliography}


\bigskip

\end{document}